\newtheorem{assumption}{Assumption}
\newtheorem{definition}{Definition}
\newtheorem{remark}{Remark}
\newtheorem{lemma}{Lemma}
\newtheorem{theorem}{Theorem}
\newtheorem{corollary}{Corollary}
\newcommand\norm[1]{\left\lVert#1\right\rVert}
\begin{document}
%\linenumbers
% paper title
% can use linebreaks \\ within to get better formatting as desired
\title{\textbf{Probabilistic Verification of Approximate Algorithms with Unstructured Errors: Application to Fully Inexact Generalized ADMM}}

\author{
  Anis Hamadouche,\,\, 
  Yun Wu,\,\,
  Andrew M.\ Wallace,\,\,
  and\,
  Jo\~ao F.\ C.\ Mota%
  \IEEEcompsocitemizethanks{
    \IEEEcompsocthanksitem
    Work supported by UK's EPSRC (EP/T026111/1, EP/S000631/1), and the MOD University Defence Research Collaboration. 
    \IEEEcompsocthanksitem
    Anis Hamadouche, Yun Wu, Andrew M.\ Wallace, and Jo\~ao F.\ C.\ Mota
    are with the School of Engineering \& Physical Sciences, Heriot-Watt University, Edinburgh EH14 4AS,
    UK. (e-mail: \{ah225,y.wu,a.m.wallace,j.mota\}@hw.ac.uk).
  }
}

\maketitle
\thispagestyle{plain}
\pagestyle{plain}

\begin{abstract}
\textbf{We analyse the convergence of an approximate, fully inexact, ADMM algorithm under additive, deterministic and probabilistic error models. We consider the generalized ADMM scheme that is derived from generalized Lagrangian penalty with additive (smoothing) adaptive-metric quadratic proximal perturbations. We derive explicit deterministic and probabilistic convergence upper bounds for the lower-$\mathcal{C}^2$ nonconvex case as well as the convex case under the Lipschitz continuity condition. We also present more practical conditions on the proximal errors under which convergence of the approximate ADMM to a suboptimal solution is guaranteed with high probability. We consider statistically and dynamically-unstructured conditional mean independent bounded error sequences. We validate our results using both simulated and practical software and algorithmic computational perturbations. We apply the proposed algorithm to a synthetic LASSO and robust regression with $k$-support norm regularization problems and test our proposed bounds under different computational noise levels. Compared to classical convergence results, the adaptive probabilistic bounds are more accurate in predicting the distance from the optimal set and parasitic residual error under different sources of inaccuracies.}
\end{abstract}
% Note that keywords are not normally used for peerreview papers.
\begin{IEEEkeywords}
\textbf{Numerical Linear Algebra; Numerical Optimization; ADMM; Douglas-Rachford method; Approximate Computing.}
\end{IEEEkeywords}

\IEEEpeerreviewmaketitle
\newcommand\scalemath[2]{\scalebox{#1}{\mbox{\ensuremath{\displaystyle #2}}}}
%\section*{NOMENCLATURE}
%\begin{description}
%\item[$t$] Sample time 
%\end{description}
\section{INTRODUCTION}
%https://link.springer.com/content/pdf/10.1007/s40305-021-00368-3.pdf
The Alternating Direction of Multiplier Method (ADMM) originally emerged from the early works of Peaceman and Rachford \cite{peaceman1955numerical, douglas1955numerical, douglas1956numerical, douglas1964general}, Glowinski and Marrocco \cite{glowinski1975approximation}, and Gabay and Mercier \cite{gabay1976dual} on applying numerical procedures to solve partial differential equations arising in heat conduction and continuum mechanics problems. Due to its simple implementation and efficiency in solving large-scale optimization problems encountered in statistics, and more recent machine learning problems, ADMM has evolved into different forms and has been applied to solve more general nonconvex problems that cannot be solved using conventional methods. ADMM is also a popular method for online and distributed optimization \cite{boyd2011distributed}. 
%https://d1wqtxts1xzle7.cloudfront.net/47926057/1402.5131v1-libre.pdf?1470780478=&response-content-disposition=inline%3B+filename%3DGuarantees_for_Multi_Step_Stochastic_ADM.pdf&Expires=1658172759&Signature=XWm1ULRHqXJgkAopLG6H-LZ8Dv8Kp4usv9VNMuJ0dFMtj1UPz7qrEWxkr7nKq78RjIDSpuRbgKGOfF5s5cSY8~vs5~OeZHrPmAIxJhMWVIlXumfuYD3G~rzR7T9GIjd8JxZK-NOR71FiWZ5XobPwBbkq2pUS7srmO8X44xQb2vsbndfJeApLKx0IjdWDvIFRqVMgXSBnaqUa8i7TvKe9v1w-OhYVmWHtQIA9hel7Y6JSooVnt3p-oIa~bLHTaIEqoqoUfHjMU7Qow4ngaZUNjt7-5PAhubdkOVQKrmyfQIqxmiyvZPpaa3ajgalygXJ~mceXKvIHY~uUBcvmSqeRCQ__&Key-Pair-Id=APKAJLOHF5GGSLRBV4ZA
ADMM can be viewed as a decomposition procedure that exploit the separability of the objective function to reduce the difficulty of joint minimisation arising from classical methods of multiplier \cite{hestenes1969multiplier, powell1969method,rockafellar1973multiplier, bertsekas1976multiplier} at the cost of increased number of iterations \cite{deng2016global}. However, this procedure yields efficient local solutions to subproblems that are coordinated via constraints to find the global solution \cite{gabay1976dual,glowinski1975approximation}. The subproblems are typically easier to solve than the original problem, and as a matter of fact, closed-form solutions are often available. %For example, for solving the $\ell_1$-norm minimization problem, the iterative scheme is
For instance, in the following LASSO problem
\begin{equation}
 % \label{Eq:LassoProblem}
  \begin{split}
      &\underset{x \in \mathbb{R}^n,z \in \mathbb{R}^m}{\text{minimize}} \,\,\,
      f(x) := \|Ax-b\|_2^2 + \|z\|_1,\\
      &\text{subject to} \,\,\,
      x-z = 0\,,  
  \end{split}
\end{equation}
when the data matrix $A$ has some circulant structure as those in signal/image processing, the system can be solved via fast Fourier transform with insignificant computation cost \cite{afonso2010fast, allison2012accelerated, matakos2013accelerated}. However, when the ADMM subproblems do not possess closed-form solutions, the solution is obtained (or approximated) iteratively which can lead to increased computation cost with additional incurred truncation (computational) errors. This motivates the need for new computationally-aware variants of the classical ADMM with  inexpensive auxiliary subproblems.

Another feature that makes ADMM technique practically appealing is its robustness to poorly selected algorithm parameters, i.e., the method is guaranteed to converge independently from the selected parameter. For instance, if the objective functions are strongly convex and have Lipschitz-continuous gradients, then the iterates produced by the ADMM algorithm converge linearly to the optimum in a certain distance metric no matter which parameter value is selected \cite{deng2016global, giselsson2014diagonal, giselsson2016linear, nishihara2015general,francca2016explicit, hong2017linear, han2018linear, chen2021equivalence}. For an extensive review of ADMM, the interested reader is referred to the survey papers by \cite{boyd2011distributed, eckstein2015understanding, glowinski2014alternating}.

Before we delve into the details of composite optimization, ADMM technique and proximal calculus, let us first recall some basic definitions from the theory of optimization and proximal calculus. 
\subsection{Background}
\begin{definition}[Convex and Concave functions]
A function $f: \mathbb{R}^n\rightarrow \mathbb{R}\cup\{\pm\infty\}$ is \textit{convex} if 
\begin{equation}
    f(\beta x + (1-\beta)y) \leq \beta f(x) + (1-\beta) f(y),
    \quad \forall x,y \in \mathbb{R}^n.
\end{equation}
with $\beta \in (0,1)$. If $f$ is convex then $-f$ is \textit{concave}.
\end{definition}
\begin{definition}[Proper function]
A function is \textit{proper} of its value is never $-\infty$ and it is finite somewhere.
\end{definition}
\begin{definition}[lower semicontinuous function]
A function $f: \mathbb{R}^n\rightarrow \mathbb{R}\cup\{\pm\infty\}$ is \textit{lower semicontinuous} at a point $y \in \mathbb{R}^n$ if and only if
\begin{equation}
    \underset{x \rightarrow y}{\lim \inf} f(x) \leq f(y)
\end{equation}
\end{definition}
\begin{definition}[Closed function]
A convex and proper function is \textit{closed} if and only if it is lower semicontinuous.
\end{definition}
\begin{definition}[Saddle point]
Let $\mathcal{L}(x,y):\mathbb{R}^n\times \mathbb{R}^m:\rightarrow\mathbb{R}\cup\{\pm\infty\}$ be a convex-concave function. The tuple $(x^\star,y^\star)$ is said to be a \textit{saddle point} of $\mathcal{L}$ if
\begin{equation}
    \mathcal{L}(x^\star,y) \leq
    \mathcal{L}(x^\star,y^\star) \leq
    \mathcal{L}(x,y^\star)
\end{equation}
for all $x\in\mathbb{R}^n$, $y\in\mathbb{R}^m$.
\end{definition}
\begin{definition}[subgradient]
\label{Def:Subgradient}
% https://link.springer.com/article/10.1007/s10898-019-00808-8
Let $w \in \partial f(x)$, then $\forall y \in\mathbb{R}^n$,  we have
\begin{equation}
    f(y) \geq f(x) + \langle w , y-x \rangle.
\end{equation}
\end{definition}
\begin{definition}[$\varepsilon$-subgradient]
\label{Def:e-Subgradient}
Let $w \in \partial_{\varepsilon} f(x)$, then $\forall y \in\mathbb{R}^n$, we have
\begin{equation}
    f(y) \geq f(x) + \langle w , y-x \rangle-\varepsilon.
\end{equation}
\end{definition}
% \begin{definition}[$\lambda$-averaged operator]\label{DEF:Averaged}
% The \textit{$\lambda$-averaged operator} of operator $T_1:\mathcal{H}\rightarrow \mathcal{H}$ and operator $T_1:\mathcal{H}\rightarrow \mathcal{H}$ is defined as follows 
% \begin{equation}
% T_{\lambda} := \lambda T_1+(1-\lambda)T_2
% \end{equation}
% \end{definition}
% \begin{definition}[\texit{Krasnosel'skii-Mann (KM)} iteration]\label{DEF:KM}
% Given a nonexpansive map $T$, then the KM iteration is given by
% \begin{equation}
%     \label{ALGO:KM}
%     v^{k+1} = T_{\lambda_k}(v^k),
% \end{equation}
% where $\lambda_k \in (0,1)$ and $T_{\lambda_k}:\mathcal{H}\rightarrow \mathcal{H}$ is the \textit{$\lambda_k$-averaged operator} of $T:\mathcal{H}\rightarrow \mathcal{H}$ and the identity operator $I_{\mathcal{H}}$.
% \end{definition}
% \begin{definition}[Approximate \texit{Krasnosel'skii-Mann (KM)} iteration]
% The \texit{approximate KM} iteration is obtained from the inexact computation of \eqref{ALGO:KM} as follows
% \begin{equation}
%     \label{ALGO:KM}
%     \tilde{v}^{k+1} = T_{\lambda_k}(v^k)+\lambda_k\varepsilon^k,
% \end{equation}
% where $\varepsilon^k$ is some computation error caused by the inexact operator $\tilde{T}$.
% \end{definition}
% \begin{definition}
% \begin{equation}
%     (D \triangleright \psi)(y):=\inf \{\psi(x) \vert Dx=y\}
% \end{equation}
% \end{definition}
\begin{definition}[Proximal operator]
\label{Def:Prox}
For all $y \in \mathbb{R}^n$ we have
\begin{equation}
\begin{split}
  \label{Eq:ProxApproximate}    
    \text{prox}_{u}^{}(y)      :=     \underset{z}{\arg \min}\,\, u(z) + \frac{1}{2}\|z -       y\|_2^2.
\end{split}
\end{equation}
\end{definition}
\begin{definition}[Approximate Proximal operator]
\label{Def:eProx}
\begin{equation}
\begin{split}
  \label{Eq:ProxApproximate}    
    \text{prox}_{u}^{\varepsilon}(y) 
    := 
    \Big\{
      x \in \mathbb{R}^n\,:\,
      u(x) + \frac{1}{2}\|x -y\|_2^2 \leq\\ \varepsilon + \underset{z}{\inf}\,\,
      u(z) + \frac{1}{2}\|z -
      y\|_2^2 
    \Big\}\,,    
\end{split}
\end{equation}
\end{definition}
% \begin{definition}[Resolvent set]
% The \textit{resolvent set} of an operator $G$ is given by
% \begin{equation}
%     R_{G}=(I_{\mathcal{H}}+\gamma G)^{-1}
% \end{equation}
% \end{definition}

% \begin{definition}[Cayley operator]
% \label{DEF:Cayley}
% The \textit{Cayley operator} of an operator $G$ is the reflected resolvent operator
% \begin{equation}
%     C_{G} = 2R_{G}-I_{\mathcal{H}}
% \end{equation}
% \end{definition}
\begin{definition} [lower-$\mathcal{C}^2$ functions \cite{hare2009computing}]
%https://www.researchgate.net/profile/Claudia-Sagastizabal/publication/220590071_Computing_proximal_points_of_nonconvex_functions/links/0fcfd508c21f098017000000/Computing-proximal-points-of-nonconvex-functions.pdf
The function $f$ is \textit{lower-$\mathcal{C}^2$} on an open set $V$ if for each $\overline{x}\in V$ there is a neighbourhood $V_{\overline{x}}$ of $\overline{x}$ upon which a representation $f(x) = \underset{t\in T}{\max}  f_t (x)$ holds for $x$ in $V_{\overline{x}}$, where $T$ is a compact set and the functions $f_t$ are twice continuously differentiable jointly in both $x$ and $t$.
\end{definition}
\begin{definition}[lower-$\mathcal{C}^2$ functions \cite{rockafellar2009variational}]
The function $f$ is \textit{lower-$\mathcal{C}^2$} on an open set $V$ if at any point $x$ in $V$ , the sum of $f$ with a quadratic term is a convex function on an open neighbourhood $V'$ of $x$.
\end{definition}
% \begin{definition}[Positive/Negative Definite, Positive/Negative Semi Definite, and Indefinite Matrices \cite{harville1998matrix}]
% A matrix $A$ is called:
% \begin{itemize}
%     \item Positive definite (PD) if for any vector $x≠0$, $x^\top Ax>0$;
%     \item Positive semi definite (PSD) if $x^\top Ax\geq0$;
%     \item Nonnegative definite if it is either positive definite or positive semi definite;
%     \item Negative definite (ND) if $x^\top Ax<0$;
%     \item Negative semi definite (NSD) if $x^\top Ax\leq0$;
%     \item Nonpositive definite if it is either negative definite or negative semi definite;
%     \item Indefinite if it is nothing of those.
% \end{itemize}
% \end{definition}
\begin{definition} Weighted norm
Given a PSD matrix $M$, we say that $\|x\|_{M}^2 = \langle x, M x \rangle = \langle  Mx, x \rangle$ is an M-weighted norm of $x$.
\end{definition}

\subsection{Problem statement}
%A general problem format for ADMM is obtained by splitting the objective into two separable parts, $f(x)$ and $g(z)$, and then linking the variables $x \in \mathbb{R}^n$ and  $z \in \mathbb{R}^m$ by a set of equality constraints
The category of problems that ADMM solves is
\begin{equation}
  \label{Eq:Problem}
  \begin{split}
      &\underset{x \in \mathbb{R}^n,z \in \mathbb{R}^m}{\text{minimize}} \,\,\,
      f(x) := g(x) + h(z)\,,\\
      &\text{subject to} \,\,\,
      Ax+Bz = c\,,  
  \end{split}
\end{equation}
where $g$ and $h$ are lower-$\mathcal{C}^2$, possibly convex and nondifferentiable functions, $A \in \mathbb{R}^{p\times n}$ and $B \in \mathbb{R}^{p\times m}$. 
%https://drive.google.com/file/d/11cv7nB7qqM4w-iiuC3VbgfbbX5yzlcw7/view?usp=sharing (3.11)

The \textit{augmented Lagrangian} for problem \eqref{Eq:Problem} is constructed as follows
\begin{equation}
\label{Lagrangian}
    \mathcal{L}_\rho(x,z, y) = g(x)+h(z) + y^\top (Ax+Bz - c) + \frac{\rho}{2}\norm{Ax+Bz-c}_2^2 
\end{equation}
where $y$ is the Lagrange multiplier associated with the linear constraint in \eqref{Eq:Problem}, and $\rho$ is a positive scalar parameter. The ADMM iteration is obtained by minimizing the augmented Lagrangian \eqref{Lagrangian} with respect to $x$ and $z$ variables, and updating the dual variable $y$ as follows
\begin{subequations}
\label{ADMM0}
    \begin{alignat}{3}
    &x^{k+1} = \underset{x}{\arg\min}\quad \mathcal{L}_\rho(x,z^k, y^k)&\\
    &z^{k+1} = \underset{z}{\arg\min}\quad \mathcal{L}_\rho(x^{k+1},z, y^k)&\\
    &y^{k+1} = y^k+\rho(Ax^{k+1}+Bz^{k+1}-c)&.
    \end{alignat}
\end{subequations}
% The associated \textit{Karush–Kuhn–Tucker (KKT)}  operator $K$ acting on $R^n \times R^m \times R^p$ can then be defined as follows  
% \begin{equation}
% K(x,z,y) =  \begin{bmatrix}
% \partial_x \mathcal{L}_\rho(x,z, y) \\
% \partial_z \mathcal{L}_\rho(x,z, y) \\
% -\partial_v \mathcal{L}_\rho(x,z, y)\end{bmatrix} 
% \end{equation}
% where the set of saddle points, i.e., primal-dual optimal points of $\mathcal{L}_\rho$ is the zero set of $K$. 
% The KKT operator is monotone and can be written as a sum of monotone operators as follows:
% Let us define the following new operators
% \begin{align}
%     d_1(u) &:= (-A \triangleright f)(-u-c) \\
%     d_2(u) &:= (-B\triangleright g)(u).
% \end{align}
% The original ADMM iteration scheme \eqref{ADMM0} is equivalent to the following 
% \begin{subequations}
% \label{ADMM1}
% \begin{alignat}{3}
%     x^{k} &:= \text{prox}_{\gamma d_1}(2z^k-v^k)&\\
%     z^k &:= \text{prox}_{\gamma d_2}(v^k)&\\
%     v^{k+1} &:= v^k+(x^{k}-z^{k}).&
% \end{alignat}
% \end{subequations}
% where the new sequence $v^{k+1}$ satisfies the \textit{Douglas-Rachford} iteration
% \begin{subequations}
% \begin{alignat}{3}
%     &v^{k+1} := \big(\frac{1}{2}I_{\mathcal{H}}+\frac{1}{2}C_{\gamma d_1}C_{\gamma d_2}\big)(v^k)&\\
%     &z^{k+1} :=R_{\gamma d_2}(v^{k+1}).&
% \end{alignat}
% \end{subequations}
%3.10, https://drive.google.com/file/d/11cv7nB7qqM4w-iiuC3VbgfbbX5yzlcw7/view?usp=sharing
%https://drive.google.com/file/d/1p8fxxH6xDTHU_F2oOdzJBjurQz6ejsdx/view?usp=sharing
The equivalent \textit{scaled proximal ADMM} is given by 
\begin{subequations}
\label{ADMMProx}
    \begin{alignat}{3}
    &x^{k+1} = \underset{x}{\arg\min}\quad g(x)+\frac{1}{2\lambda}\|Ax+Bz^k-c+v^k\|^2&\label{ADMMProxSub1}\\
    &z^{k+1} = \underset{z}{\arg\min}\quad h(z)+\frac{1}{2\lambda}\|Ax^{k+1}+Bz-c+v^k\|^2&\label{ADMMProxSub2}\\
    &v^{k+1} = v^k+(Ax^{k+1}+Bz^{k+1}-c)&.
    \end{alignat}
\end{subequations}
where $v^k=({1}/{\rho})y^k$ and $\lambda = 1/\rho$.

Consider the case where both ADMM subproblems \eqref{ADMMProxSub1} and \eqref{ADMMProxSub1} are solved inexactly and/or using iterates with additive perturbations ($r_x^k$ and $r_z^k$); i.e., $x^k = \overline{x}^k + r_x^k$ and $z^k = \overline{z}^k + r_z^k$, where $\overline{x}^k$ and $\overline{z}^k$ are the exact iterates. Classical convergence guarantees cannot be easily extended to the inexact case since they completely ignore the presence and propagation of computational errors during iterations. Therefore, a more rigorous treatment of such errors is needed to verify the convergence, reliability and portability of the ADMM (and its variants) to different computational environments.

Before we analyse the convergence of the inexact ADMM, let us introduce some other variants of the original scheme and focus our analysis to the most generalized form. 
\subsection{WL-ADMM approximation}
The approximate WL-ADMM is formulated by approximating the euclidean norm by some positive definite (PD) $L_k $-weighted norm \big($\|.\|_{L_k }^2$\big) in the $x$ and $z$ updates as follows:
\begin{subequations}
\label{WL-ADMM}
    \begin{alignat}{3}
    \label{WLADMM1}
    &x^{k+1} = \underset{x}{\arg\min}\quad g(x)+\frac{1}{2\lambda}\|Ax+Bz^k-c+v^k\|_{L_k }^2&\\
    \label{WLADMM2}
    &z^{k+1} = \underset{z}{\arg\min}\quad h(z)+\frac{1}{2\lambda}\|Ax^{k+1}+Bz-c+v^k\|_{L_k }^2&\\
    \label{WLADMM3}
    &v^{k+1} = v^k+(Ax^{k+1}+Bz^{k+1}-c)&.
    \end{alignat}
\end{subequations}
where $v^k=({1}/{\rho})y^k$, $\lambda = 1/\rho$.
\subsection{WLM-ADMM approximation}
Adding quadratic perturbations to \eqref{WLADMM1} and \eqref{WLADMM2} yields the approximate WLM-ADMM
\begin{subequations}
\label{WLM-ADMM}
    \begin{alignat}{3}
    &x^{k+1} = \underset{x}{\arg\min}\quad g(x)+\frac{1}{2\lambda}\|Ax+Bz^k-c+v^k\|_{L_k }^2&\notag\\&+\frac{1}{2}\|x-x^k\|_{M_x^k}^2&\\
    &z^{k+1} = \underset{z}{\arg\min}\quad h(z)+\frac{1}{2\lambda}\|Ax^{k+1}+Bz-c+v^k\|_{L_k }^2&\notag\\&+\frac{1}{2}\|z-z^k\|_{M_z^k}^2&\\
    &v^{k+1} = v^k+(Ax^{k+1}+Bz^{k+1}-c)&.
    \end{alignat}
\end{subequations}
where $M_x^k$ and $M_z^k$ are positive definite matrices. Note that in the case of convex function $f$, adding quadratic terms (proximal terms) makes the objective functions strongly convex which improves the condition number of the problem being solved at the expense of yielding approximate solutions. 

\eqref{WLM-ADMM} can be written as
\begin{subequations}
\label{WLM-ADMM1.1}
    \begin{alignat}{3}
    &x^{k+1} = \underset{x}{\arg\min}\quad g(x)+\frac{1}{2}\|\Lambda_{1_k}^{1/2}x -\Lambda_{1_k}^{-1/2}\Gamma_{1_k}\|^2&\\
    &z^{k+1} = \underset{z}{\arg\min}\quad h(z)+\frac{1}{2}\|\Lambda_{2_k}^{1/2}z -\Lambda_{2_k}^{-1/2}\Gamma_{2_k}\|^2&\\
    &v^{k+1} = v^k+(Ax^{k+1}+Bz^{k+1}-c)&.
    \end{alignat}
\end{subequations}
Or in generalized norms form as follows
\begin{subequations}
\label{WLM-ADMMP1.2}
    \begin{alignat}{3}
    &x^{k+1} = \underset{x}{\arg\min}\quad g(x)+\frac{1}{2}\|x -\Lambda_{1_k}^{-1}\Gamma_{1_k}\|_{\Lambda_{1_k}}^2&\\
    &z^{k+1} = \underset{z}{\arg\min}\quad h(z)+\frac{1}{2}\|z -\Lambda_{2_k}^{-1}\Gamma_{2_k}\|_{\Lambda_{2_k}}^2&\\
    &v^{k+1} = v^k+(Ax^{k+1}+Bz^{k+1}-c)&.
    \end{alignat}
\end{subequations}
where 
\begin{subequations}
\label{WLM-ADMM1.2Vars}
    \begin{alignat}{3}
    & \Lambda_{1_k} = \frac{1}{\lambda} A^\top L_k  A + M_x^k&\\
    & \Gamma_{1_k} = M_x^k x^k-\frac{1}{\lambda}A^\top L_k  (Bz^k-c+v^k)&\\
    & \Lambda_{2_k} = \frac{1}{\lambda} B^\top L_k  B + M_z^k&\\
    & \Gamma_{2_k} = M_z^k z^k-\frac{1}{\lambda}B^\top L_k  (Ax^{k+1}-c+v^k)&   
    \end{alignat}
\end{subequations}
\subsection{Related work}
A new ADMM variant of \eqref{ADMM0} was proposed in \cite{han2014customized} that yields simple proximal evaluations of the objective functions according the following scheme 
\begin{subequations}
\label{WLM-ADMM1.1}
    \begin{alignat}{3}
    \label{WLM-ADMM1.1Eq1}
    &\zeta_{x}^k=\frac{1}{\rho}(s_{x}^{k-1}-x^k) \in \partial g(x^k)&\\
    &e_{x}(x^k,v^k)=x^k-P_{\mathcal{X}}[x^k-\rho(\zeta^k-A^\top y^k)]&\\    
    &s_{x}^k=x^k+\rho \zeta_{x}^k-\gamma \alpha_k e_{x}(x^k,y^k)&\\    
    &x^{k+1} = \underset{x}{\arg\min}\quad g(x)+\frac{1}{2\rho}\|x -s_{x}^k\|^2&\\
    \label{WLM-ADMM1.1Eq2}
    &\zeta_{z}^k=\frac{1}{\rho}(s_{z}^{k-1}-z^k) \in \partial h(z^k)&\\    
    &e_{z}(z^k,y^k)=z^k-P_{\mathcal{Z}}[z^k-\rho(\zeta_{z}^k-B^\top y^k)]&\\        
    &s_{z}^k=z^k+\rho \zeta_{z}^k-\gamma \alpha_k e_{z}(z^k,y^k)&\\    
    &z^{k+1} = \underset{z}{\arg\min}\quad h(z)+\frac{1}{2\rho}\|z -s_{z}^k\|^2&\\
    &y^{k+1} = y^k+\rho(Ax^{k+1}+Bz^{k+1}-c).
    \end{alignat}
\end{subequations}
where $P_{\mathcal{X}}$ and $P_{\mathcal{Z}}$ denote the orthogonal projections into subspaces $\mathcal{X}$ and $\mathcal{Z}$, respectively. The stepsize sequence $\{\alpha_k\}_{k > 0}$ is chosen with convergence guarantees. Note that \eqref{WLM-ADMM1.1Eq1} and \eqref{WLM-ADMM1.1Eq2} are both proximal evaluations of $g$ and $h$ at $s_x^k$ and $s_z^k$, respectively.

%inexact
\subsubsection{The inexact proximal ADMM}
For the method to be implementable, it is important for numerical optimisation algorithms to handle approximate solutions. Considering error-resilient applications which involve optimization problems that do not require highly accurate solutions, more computationally-friendly inexact (approximate) ADMM schemes can be used instead of the original scheme. 

%Indefinite Semi-proximal ADMM => proximal ADMM: https://arxiv.org/pdf/1906.12112.pdf
Motivated by Rockafellar's proximal method of multipliers, which involves an augmented Lagrangian with an additional quadratic proximal term, \cite{eckstein1994some, fazel2013hankel, he2002new} proposed inexact \textit{semi-proximal ADMM} with added proximal terms as follows
\begin{subequations}
\label{InexactProxADMM}
    \begin{alignat}{3}
    \label{InexactProxADMMEQ1}
    &x^{k+1} \approx \underset{x}{\arg\min}\quad \mathcal{L}_\rho(x,z^k, y^k)+\frac{1}{2}\|x-x^k\|_{M_x^k}^2&\\
    \label{InexactProxADMMEQ2}
    &z^{k+1} \approx \underset{z}{\arg\min}\quad \mathcal{L}_\rho(x^{k+1},z, y^k)+\frac{1}{2}\|z-z^k\|_{M_z^k}^2&&\\
    &y^{k+1} = y^k+\alpha\rho(Ax^{k+1}+Bz^{k+1}-c)&.
    \end{alignat}
\end{subequations}
with Fortin and Glowinski’s relaxation factor $\alpha\in[0, (1+\sqrt{5})/2]$ proposed by \cite{fazel2013hankel}. This algorithm is shown to preserve the good features of the primal proximal method of multipliers, with the additional advantage that it leads to a decoupling of the constraints. $M_x^k$ and $M_z^k$ are selected to be PSD matrices in \cite{fazel2013hankel} and in \cite{he2002new} they are selected to have some positive minimum spectral radius. Note that the inexact semi-proximal ADMM reduces to the classical ADMM when $M_x^k=M_z^k=0$ and to the proximal ADMM \cite{eckstein1994some} when both proximal matrices are PD and $\alpha=1$. Global convergence of the semi-proximal ADMM was proved in \cite{deng2016global} and the iteration complexity (convergence rate) of $O(1/k)$ was derived in \cite{he20121}. Faster rates can be achieved with varying $\alpha$ and better selection of $\rho$, $M_x^k$ and $M_z^k$ \cite{deng2016global}. For instance, \cite{he2002new,deng2016global,fazel2013hankel,li2015proximal} considered indefinite self-adaptive $M_x^k$ and $M_z^k$ and an optimal self-adaptive scheme is presented in \cite{he2020optimally}. $M_x$ and $M_z$ are typically selected to obtain a closed-form solution of the subproblems \eqref{InexactProxADMMEQ1} and \eqref{InexactProxADMMEQ2}. For instance, if we update
\begin{subequations}
    \begin{alignat}{3}
    &M_x^k = \mu_x^k I-\rho A^\top A\\
    &M_z^k = \mu_z^k I-\rho B^\top B.
    \end{alignat}
\end{subequations}
with $\mu_x^k > \rho \|A^\top A\|$ and $\mu_z^k > \rho \|B^\top B\|$ then the quadratic terms in \eqref{InexactProxADMM} are linearized, and the linearized subproblems admit closed-form solutions in terms of resolvent operators of $\partial g$ and $\partial h$, respectively \cite{li2015proximal}. Relaxing the PSD condition on the proximal matrices $M_x^k$ and $M_z^k$, \cite{deng2016global} proved global convergence under sufficient conditions and derived explicit linear rates for different scenarios assuming strong convexity, a gradients Lipschitz continuity and/or full rank conditions on the matrices $A$ and $B$.
%End of proximal ADMM

%Beginning of adaptive penalty Lagrangian
\subsubsection{The generalized ADMM}
%https://link.springer.com/content/pdf/10.1007/s10957-011-9876-5.pdf
Using a variable penalty in the augmented Lagrangian, \cite{kontogiorgis1998variable} proposed the following modified ADMM iterative scheme
\begin{subequations}
\label{ADMM+GenLagrangian}
    \begin{alignat}{3}
    &x^{k+1} \approx \underset{x}{\arg\min}\quad \mathcal{L}_{L_k}(x,z^k, y^k)&\\
    &z^{k+1} \approx \underset{z}{\arg\min}\quad \mathcal{L}_{L_k}(x^{k+1},z, y^k)&\\
    &y^{k+1} = y^k+L_k(Ax^{k+1}+Bz^{k+1}-c)&.
    \end{alignat}
\end{subequations}
where the generalized augmented Lagrangian $\mathcal{L}_{L_k}(x,z, y)$ is given by
\begin{equation}
\label{GenLagrangian}
    \mathcal{L}_{L_k}(x,z, y) = g(x)+h(z) + y^\top L_k(Ax+Bz - c) + \frac{1}{2}\norm{Ax+Bz-c}_{L_k}^2 
\end{equation}
with symmetric positive definite matrices $\{L_k\}$ that satisfy some spectral properties. A relaxed variant of \eqref{ADMM+GenLagrangian} can be obtained by relaxing the multiplier updates as follows
\begin{subequations}
\label{ADMM+GenLagrangian}
    \begin{alignat}{3}
    &x^{k+1} \approx \underset{x}{\arg\min}\quad \mathcal{L}_{L_k}(x,z^k, y^k)&\\
    &z^{k+1} \approx \underset{z}{\arg\min}\quad \mathcal{L}_{L_k}(x^{k+1},z, y^k)&\\
    &y^{k+1} = y^k+\alpha L_k (Ax^{k+1}+Bz^{k+1}-c)&.
    \end{alignat}
\end{subequations}
where $\alpha\in[0, (1+\sqrt{5})/2]$ is the Fortin and Glowinski’s relaxation factor. %https://scholarship.rice.edu/bitstream/handle/1911/102203/TR12-14.PDF?sequence=
The authors in \cite{he2002new} also  consider \eqref{ADMM+GenLagrangian} for a sequence of bounded PD matrices $\{L_k\}_{k\geq 1}$ and prove the convergence of the semi-proximal Algorithm restricted to $\alpha = 1$ for differentiable functions $f$ and $g$. In \cite{zhang2011unified}, $\alpha$ is replaced by a general positive definite matrix $C$; i.e., the dual update becomes
\begin{equation}
    y^{k+1} = y^k+C (Ax^{k+1}+Bz^{k+1}-c).
\end{equation}
and convergence is established assuming $C$ has a maximum spectral radius of $1$.

By using a constant penalty matrix $L_k = H$ and linearizing the quadratic term $\frac{1}{2}\norm{Ax+Bz-c}_{H}^2$, the authors in \cite{xu2011class} proposed three modifications of the generalised scheme in \eqref{ADMM+GenLagrangian} and established sufficient conditions for their convergence but without deriving explicit rates. 

\begin{remark}
The objective of adding generalised proximal terms \eqref{InexactProxADMM} or using adaptive penalty \eqref{ADMM+GenLagrangian} is to improve the convergence speed of the original algorithm \eqref{ADMM0}. Adding generalised proximal terms \eqref{InexactProxADMM} improves the condition number of the subproblems and can also yield closed-form solutions. Using adaptive penalty matrix \eqref{ADMM+GenLagrangian} can be viewed as performing a constraint preconditioning step, which was shown to improve the convergence speed in quadratic programming \cite{ghadimi2014optimal}.
\end{remark}
%End of Lagrangian with adaptive penalty

%Beginning of error termination criteria
\subsubsection{Termination criteria for ADMM}
By exploiting the link between the ADMM, the proximal point algorithm (PPA) \cite{chen1994proximal} and Douglas–Rachford (DR) splitting for maximal monotone operators \cite{douglas1955numerical,douglas1964general}, along with a relative approximation error of the hybrid proximal projection (HPP) \cite{solodov1999hybrid}, \cite{eckstein2018relative} presented convergence guarantees for partially inexact ADMM based on some relative error criterion (in one subproblem); which is an extension of the absolute error criterion of \cite{eckstein1992douglas}. More recently, the weak convergence of relaxed inertial and inexact ADMM variants was established in \cite{alves2019iteration,geremia2020relative,alves2020relative} using same relative error criteria of \cite{eckstein2018relative} but with additional projection correction step  \cite{alvarez2004weak, solodov1999hybrid, eckstein2009general, }. Weak convergence of the fully inexact ADMM with summable error and relative error tolerances can be found in \cite{svaiter2011weak} and \cite{svaiter2019weakly}, respectively. 

%Details of error criteria used
In the original paper \cite{eckstein2018relative}, the author derived a partially inexact (only one subproblem is solved inexactly) version of ADMM with the following relative-error termination criterion
\begin{equation}
    \|\nu^{k,l}\| \leq \kappa \|y^{k,l}-y^{k}-\rho(z^{k,l}-z^{k})\| 
\end{equation}
where $\nu^{k,l} \in \partial_{x} \mathcal{L}_{\rho}(x^{k,l},z^k,y^k)$ and $l$ is the iteration counter of the inner loop of the first subproblem of \eqref{ADMM0}. $z^{k,l}$ and $y^{k,l}$ are updated as in \eqref{ADMM0} by using the inner (being improved) iterate $x^{k,l}$ instead of the exact $x^k$. A quadratic variant of this criterion is used in \cite{alves2020relative} with the additional effect of inertia as follows
\begin{equation}
    \|\nu^{k,l}\|^2 \leq \kappa^2 \big[\|y^{k,l}-y^{k}-\rho(z^{k,l}-z^{k})\|^2+\rho^2\|x^{k,l}-z^{k,l}\|^2\big] .
\end{equation}
Although both criteria are applicable, the following criterion manifested better numerical performance \cite{alves2020relative}
\begin{equation}
    \|\nu^{k,l}\|^2 \leq \kappa^2 \big[\|y^{k,l}-y^{k}-\rho(z^{k,l}-z^{k})\|^2+\rho^2\|x^{k,l}-z^{k,l}\|^2\big]. 
\end{equation}
The outer loop termination criterion used in \cite{eckstein2018relative,alves2020relative} is given by
\begin{equation}
    \inf(\|-\partial_x[f(x)+g(x)]_{x=x^k}\|_\infty)\leq \varepsilon;\quad \varepsilon > 0,
\end{equation}
\subsection{Our approach}
In this work, we use the WLM-ADMM variant \eqref{WLM-ADMM} which combines both proximal features of \eqref{InexactProxADMM} and generalized penalty of \eqref{ADMM+GenLagrangian} but with scalar relaxation and with a specific choice of the proximal matrices as to yield proximal operations in the solution of ADMM subproblems as follows
%\subsection{Setup and algorithms}
%\subsubsection{Proximal WLM-ADMM}
%let the regularization norm's weight matrices $M_x^k$ and %$M_z^k$ in \ref{WLM-ADMM1.2Vars} be updated as follows
\begin{equation}
    M_x^k = \lambda_x I-\frac{1}{\lambda}A^\top L  A
\end{equation}
and
\begin{equation}
    M_z^k = \lambda_zI-\frac{1}{\lambda}B^\top L  B.
\end{equation}
Substituting $M_x^k$ and $M_z^k$ in \eqref{WLM-ADMM1.2Vars} yields the following \textit{scaled proximal WLM-ADMM} iterative scheme
\begin{subequations}
\label{WLM-ADMMProx}
    \begin{alignat}{3}
    &x^{k+1} = \text{prox}_{\frac{1}{\lambda_x} g}^{\varepsilon_g^{k+1}}(\Gamma_{1_k})\approx \text{prox}_{\frac{1}{\lambda_x} g}^{}(\Gamma_{1_k})\label{subproblem_g}\\
    &z^{k+1} = \text{prox}_{\frac{1}{\lambda_z} h}^{\varepsilon_h^{k+1}}(\Gamma_{2_k}) \approx \text{prox}_{\frac{1}{\lambda_z} h}^{}(\Gamma_{2_k})\label{subproblem_h}\\
    &v^{k+1} = v^k+(Ax^{k+1}+Bz^{k+1}-c).
    \end{alignat}
\end{subequations}
where 
\begin{subequations}
\label{WLM-ADMMProx2Norm}
    \begin{alignat}{3}
%    & \Lambda_{1_k} = \frac{1}{\lambda} A^\top L_k  A + \lambda_x M_k&\\
    & \Gamma_{1_k} = \Sigma_1 x^k-\frac{1}{\lambda}A^\top L  \bigg(Bz^k-c+v^k\bigg)&\\
    &\Sigma_1 = \bigg(\lambda_x I-\frac{1}{\lambda}A^\top L  A\bigg)\\
%    & \Lambda_{2_k} = \frac{1}{\lambda} B^\top L_k  B + \lambda_z M_k&\\
    & \Gamma_{2_k} = \Sigma_2z^k-\frac{1}{\lambda}B^\top L  (Ax^{k+1}-c+v^k)&\\
    &\Sigma_2 = \bigg(\lambda_z I-\frac{1}{\lambda}B^\top L  B\bigg).   
    \end{alignat}
\end{subequations}
and $\text{prox}_{g}^{\varepsilon_g}(.)$, $\text{prox}_{g}^{\varepsilon_g}(.)$ are the approximate proximal operators with respect to $g$ and $h$, respectively (see Definition \ref{Def:eProx}).

We noticed that the errors considered in most analyses of inexact ADMM variants so far are due to early termination of inner iterations (truncation errors); and therefore, deterministic by design. In this work, we analyse both the deterministic and the probabilistic scenarios to account for both truncation error, loop perforation error arising from approximate computing, as well as round-off error that may arise from finite precision representations and operations. Closed-form mathematical models of practical computational error sequences are hardly available. In practical scenarios, the only available information is the error upper bound, which motivates the use the proposed probablistic analysis since they only depend on this boundedness property in addition to the relaxed conditional mean independence property between error sequences that will be explained later. The probabilistic analysis is generic and can be extended to any algorithm with bounded approximation errors.

Regarding the termination criteria, we adopt the same stopping criteria of \cite{boyd2011distributed} but with minimum absolute and relative feasibility tolerances for longer simulation period.

%termination criteria are formulated as upper bounds on the distance of the approximate iterates $(x^{k+1},z^{k+1})$ from the exact iterates $(\overline{x}^{k+1},\overline{z}^{k+1})$ as follows
% \begin{subequations}
%     \begin{alignat}{3}
%     &\|x^{k+1}-\overline{x}^{k+1}\| \leq \eta_x^k&\\
%     &\|z^{k+1}-\overline{z}^{k+1}\| \leq \eta_z^k    \end{alignat}
% \end{subequations}
% where $\{\eta_x^k\}_{k\geq 1}$ and $\{\eta_z^k\}_{k\geq 1}$ are typically functions (according to Lemma \ref{Lem:BackwardProxError}) of the inexact proximal error $\varepsilon$ of Definition \ref{Def:eProx}. In the nonconvex case, $\{\eta_x^k\}_{k\geq 1}$ and $\{\eta_z^k\}_{k\geq 1}$ are generally summable sequences.
\subsection{Applications}
ADMM-type algorithms have a wide spectrum of applications in machine learning \cite{dhar2015admm,ding2019stochastic,liu2020admm, holmes2021nxmtransformer}, artificial intelligence \cite{yang2018admm,li2019admm}, MIMO detection \cite{un2019deep}, image reconstruction \cite{yang2018admm}, compressed sensing \cite{un2019deep} and model predictive control \cite{zhang2017embedded,darup2019towards,peccin2019fast, cheng2020semi, rey2016admm, tang2019distributed}. In this paper, we apply the proposed algorithm to a synthesized LASSO problem using randomly generated data and to a synthesized $k$-support-norm regularized robust regression using real solver's inaccuracies combined with algorithmic loop approximation errors. In order to validate our results, we consider the injected truncated Gaussian error sequences (for LASSO) as well as the unstructured real errors from solver and loop perforation inacurracies (for $k$-support-norm regularized robust regression) at the $x$ and $z$-subproblems of the WLM-ADMM scheme at every iteration and use the derived error bounds to estimate the rate of convergence in the presence of errors as well as asymptotic suboptimal residuals.   
\subsection{Contributions}
Error bounds of numerical algorithms estimate the distance to the solution set of a given problem. Convergence rate analyses and iteration complexity both depend on the quality of the estimated error bounds. Assuming stochastic bounded error sequences (Assumptions \textbf{M.1} and \textbf{M.1}), we propose new bounds that are probabilistically sharper without stronger assumptions other than error boundedness; i.e., relaxing the classical (idealistic) assumption of absolute summability. Secondly, following a new line of proof, we extend the obtained results to nonconvex lower-$\mathcal{C}^2$ objective functions without further assumptions on the constraints of problem \eqref{Eq:Problem}.  Finally, we apply our proposed algorithm and verification technique to LASSO and a $k$-support-norm regularized robust regression problem.   
\subsection{Organization}
The remainder of the paper is organized as follows: Section \ref{SectionMainResults} is dedicated to the main findings. The results are experimentally verified and validated in Section \ref{SectionExpResult}. We conclude our work and suggest future improvements in Section \ref{SectionConclusion}. 
\section{MAIN RESULTS}
Before we introduce our main findings, let us list the assumptions and error models that will be used later in the main theorems. 
\label{SectionMainResults}
\subsection{Assumptions}
In this subsection we list all assumptions that we are going to refer to when deriving the main convergence results in Sections~\ref{scenario1} and~\ref{scenario2}. Note that some assumptions may only apply in specific cases.
\begin{assumption}[Assumptions on the problem]
	\label{Assum:Problem}
	\hfill    
	\medskip  
	\noindent
\begin{enumerate}[label=\textbf{P.\arabic*}]
\item The functions $g,h:\mathbb{R}^n \to \mathbb{R} \cup \{+\infty\}$ are
lower-$\mathcal{C}^2$ functions.

\item The functions $g,h:\mathbb{R}^n \to \mathbb{R} \cup \{+\infty\}$ are
  closed, proper, and convex.
\item $M_x^k$, $M_z^k$ and $L$ are positive definite matrices. We implicitly require $\lambda\lambda_x > \|A^\top L A\|$ and $\lambda\lambda_z > \|B^\top L B\|$.
% \item The function $g\,:\, \mathbb{R}^{n} \to \mathbb{R}$ is convex and
%   differentiable,
%   and its gradient $\nabla g\,:\, \mathbb{R}^n \to \mathbb{R}^n$ is
%   Lipschitz-continuous with constant $L > 0$, that is,
%   \begin{equation}
%     \label{Eq:LipschitzContinuity}
%     \big\|\nabla g(y) - \nabla g(x)\big\|_2 \leq L\big\|y - x\big\|_2\,,
%   \end{equation}
%   for all $x$, $y \in \mathbb{R}^n$, where $\|\cdot\|_2$ stands for the
%   standard Euclidean norm. 
\end{enumerate}
\end{assumption}
\begin{assumption}[Error models]
	\label{Assum:ErrModels}
	\hfill    
	\medskip  
	\noindent
\begin{enumerate}[label=\textbf{M.\arabic*}]
%https://www.ime.usp.br/~jmstern/wp-content/uploads/2020/11/Springer1979.pdf
%https://en.wikipedia.org/wiki/Distribution_of_the_product_of_two_random_variables
%https://stats.stackexchange.com/questions/318243/variance-and-expectation-of-dot-product
\item
\begin{align}
    r_x^k := \overline{x}^{k}-x^{k}  \\
    r_z^k := \overline{z}^{k}-z^{k} 
\end{align}
where $\overline{x}^{k}$ and $\overline{z}^{k}$ are the exact error-free iterates, $r_x^k$ and $r_z^k$ are are additive residual error vectors in ADMM updates due to proximal errors $\varepsilon_{g}$ and $\varepsilon_{h}$, respectively. We assume noise-free initial conditions $r_x^0 = r_z^0 = 0$. 
\item
\begin{align}
    x_{\Omega}^{k} = x^{k} + r_{x_\Omega}^k \\
    z_{\Omega}^{k} = z^{k} + r_{z_\Omega}^k
\end{align}
where $r_{x_\Omega}^k$ and $r_{z_\Omega}^k$ are additive residual error vectors in ADMM updates due to random proximal errors $\varepsilon_{g_\Omega}^k$ and $\varepsilon_{h_\Omega}^k$, respectively. The $\Omega$ (sample probability space) subscript refers to the random variability in the corresponding quantities.
\item
For all $k\geq 1$, the proximal errors $\epsilon_{g_\Omega}^k$ and $\epsilon_{h_\Omega}^k$ are bounded almost surely. Specifically, we have
\begin{subequations}
    \begin{alignat}{3}
        &0 \leq \epsilon_{g_\Omega}^k \leq \varepsilon_0&\\
        &0 \leq \epsilon_{h_\Omega}^k \leq \varepsilon_0&
    \end{alignat}
\end{subequations}  
both hold with probability $1$.
\item
$\epsilon_{g_\Omega}$ and $\epsilon_{h_\Omega}$ are stationary. Specifically, we have 
\begin{subequations}
    \begin{alignat}{3}
    &\mathbb{E}\bigg[\epsilon_{g_\Omega}^k\bigg] = \mathbb{E}\bigg[\epsilon_{g_\Omega}\bigg]=\text{const.}&\\
    &\mathbb{E}\bigg[\epsilon_{h_\Omega}^k\bigg]=\mathbb{E}\bigg[\epsilon_{g_\Omega}\bigg]=\text{const.}&
    \end{alignat}
\end{subequations}  
\end{enumerate}
\end{assumption}
\subsection{Scenario 1: Approximate ADMM with deterministic errors}\label{scenario1}
In this scenario, we assume additive and deterministic error models \textbf{M.1}. We present suboptimal convergence results for the nonconvex case as well as the convex case under assumptions \textbf{P.1} and \textbf{P.2}. We show that for the deterministic case, suboptimal convergence is only achieved under summability assumption \textbf{} on the proximal errors. 
\begin{theorem}[General case with additive errors]
Assume \textbf{P.1}, \textbf{P.3} and \textbf{M.1}, then for any $x$ and $z$ such that $Ax + Bz = c$, the sequence generated using the WLM-ADMM scheme \eqref{WLM-ADMMProx} with $\lambda_x=\lambda_z=1$ satisfies the following 
\begin{align}
    &\frac{1}{k+1}\sum_{i=0}^{k}f(x^{i+1},z^{i+1})-f(x,z)\notag\\
    &+\frac{1}{k+1}\sum_{i=0}^{k}\langle\frac{1}{\lambda}L  u^{i+1},Ax^{i+1}+Bz^{i+1}-(Ax+Bz)\rangle\leq\notag\\
    &\frac{1}{2(k+1)}\Big[\|x^0-x\|^2_{\Sigma_1}+\|z^0-z\|^2\Big]\notag\\
    &+\frac{1}{k+1}\Big[\sum_{i=0}^{k}\varepsilon_g^{i+1}+\sum_{i=0}^{k}\varepsilon_h^{i+1}-\langle \Sigma_1(r_x^{k+1}-r_x),x^{k+1}-x \rangle\notag\\&-\langle r_z^{k+1}-r_z,z^{k+1}-z \rangle\Big]
    \label{Eq:Thm1}
\end{align}
For $(x^{\text{ref}},z^{\text{ref}}) = (x^\star,z^\star)$ we have
\begin{align}
    &\frac{1}{k+1}\sum_{i=0}^{k}f(x^{i+1},z^{i+1})-f(x^\star,z^\star)\\
    &+\frac{1}{k+1}\sum_{i=0}^{k}\langle\frac{1}{\lambda}L  u^{i+1},v^{k+1}-v^k\rangle\leq\\
    &\frac{1}{2(k+1)}\Big[\|x^0-x^\star\|^2_{\Sigma_1}+\|z^0-z^\star\|^2\Big]\\
    &+\frac{1}{k+1}\Big[\sum_{i=0}^{k}\varepsilon_g^{i+1}+\sum_{i=0}^{k}\varepsilon_h^{i+1}-\langle \Sigma_1r_x^{k+1},x^{k+1}-x^\star \rangle\\&-\langle r_z^{k+1},z^{k+1}-z^\star \rangle\Big]
\end{align}
%can we use: https://math.stackexchange.com/questions/3168733/inner-product-is-bounded-above-by-product-of-infty-norm-and-1-norm
where we have used the fact $r_x^{\star}=r_z^{\star}=0$.
\end{theorem}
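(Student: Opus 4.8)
The plan is to run the standard ergodic (averaging) analysis for inexact proximal ADMM, but keeping careful track of both the $\varepsilon$-proximal inaccuracies and the additive perturbations $r_x^k,r_z^k$ of model \textbf{M.1}. First I would convert the two approximate proximal updates \eqref{subproblem_g}--\eqref{subproblem_h} into $\varepsilon$-subgradient inequalities: since $x^{k+1}\in\text{prox}^{\varepsilon_g^{k+1}}_{\frac{1}{\lambda_x}g}(\Gamma_{1_k})$ with $\lambda_x=1$, the standard correspondence between approximate proximal evaluations and $\varepsilon$-subdifferentials (Definition~\ref{Def:e-Subgradient}) gives $\Gamma_{1_k}-x^{k+1}\in\partial_{\varepsilon_g^{k+1}}g(x^{k+1})$, and symmetrically $\Gamma_{2_k}-z^{k+1}\in\partial_{\varepsilon_h^{k+1}}h(z^{k+1})$. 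Under \textbf{P.2} these are genuine subgradient inequalities; under the nonconvex lower-$\mathcal{C}^2$ assumption \textbf{P.1} the subgradients are taken in the limiting sense and the inequalities carry an extra negative-curvature quadratic with the local concavity modulus, which must be absorbed by the proximal weights — this is where \textbf{P.3}, forcing $\Sigma_1=\lambda_x I-\frac{1}{\lambda}A^\top L A\succ 0$ and $\Sigma_2=\lambda_z I-\frac{1}{\lambda}B^\top L B\succ 0$, is used, and where the "new line of proof" for the nonconvex extension enters.

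Next I would substitute the definitions in \eqref{WLM-ADMMProx2Norm} together with the dual recursion $v^{k+1}=v^k+(Ax^{k+1}+Bz^{k+1}-c)$ to rewrite $\Gamma_{1_k}-x^{k+1}=\Sigma_1(x^k-x^{k+1})-\frac{1}{\lambda}A^\top L(Ax^{k+1}+Bz^k-c+v^k)$ and $\Gamma_{2_k}-z^{k+1}=\Sigma_2(z^k-z^{k+1})-\frac{1}{\lambda}B^\top L\,v^{k+1}$, using $Ax^{k+1}+Bz^{k+1}-c+v^k=v^{k+1}$. Feeding an arbitrary feasible pair $(x,z)$ with $Ax+Bz=c$ into the two $\varepsilon$-subgradient inequalities, adding them, and applying the three-point identity $\langle \Sigma(a-b),b-d\rangle=\tfrac12(\|a-d\|_\Sigma^2-\|b-d\|_\Sigma^2-\|a-b\|_\Sigma^2)$ to the proximal terms, the $A^\top L$- and $B^\top L$-parts recombine through the constraint: writing $Ax^{k+1}+Bz^k-c+v^k=v^{k+1}+B(z^k-z^{k+1})$ and using $A(x^{k+1}-x)+B(z^{k+1}-z)=Ax^{k+1}+Bz^{k+1}-c=v^{k+1}-v^k$, the dual coupling collapses to $-\frac{1}{\lambda}\langle L v^{k+1},Ax^{k+1}+Bz^{k+1}-(Ax+Bz)\rangle$ plus a leftover $\frac{1}{\lambda}\langle LB(z^{k+1}-z^k),A(x^{k+1}-x)\rangle$ that survives precisely because the $x$-subproblem still sees $z^k$ rather than $z^{k+1}$. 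Moving the collapsed coupling to the left reproduces the $\frac{1}{\lambda}\langle L u^{i+1},\cdot\rangle$ term of \eqref{Eq:Thm1} (so $u^{i+1}$ is the running dual iterate $v^{i+1}$, kept on the left-hand side rather than telescoped); and recombining the $z$-proximal contribution with that leftover via $\Sigma_2+\frac{1}{\lambda}B^\top L B=\lambda_z I=I$ is what leaves $z$ with the plain Euclidean norm in \eqref{Eq:Thm1} while $x$ retains the $\Sigma_1$-weight.

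At this point each per-iteration inequality reads, schematically, $f(x^{k+1},z^{k+1})-f(x,z)+\tfrac{1}{\lambda}\langle L v^{k+1},Ax^{k+1}+Bz^{k+1}-(Ax+Bz)\rangle\le \tfrac12\big(\|x^k-x\|_{\Sigma_1}^2-\|x^{k+1}-x\|_{\Sigma_1}^2\big)+\tfrac12\big(\|z^k-z\|^2-\|z^{k+1}-z\|^2\big)+\varepsilon_g^{k+1}+\varepsilon_h^{k+1}$, up to the perturbation corrections. Here I would substitute $\overline{x}^k=x^k+r_x^k$, $\overline{z}^k=z^k+r_z^k$ wherever the three-point identity was written for the true error-free iterates; this injects additional inner products in $r_x^k,r_z^k$, all of whose interior contributions cancel in the telescoping, while the noise-free initialization $r_x^0=r_z^0=0$ kills the $k=0$ boundary terms, leaving exactly the $-\langle\Sigma_1(r_x^{k+1}-r_x),x^{k+1}-x\rangle$ and $-\langle r_z^{k+1}-r_z,z^{k+1}-z\rangle$ terms of \eqref{Eq:Thm1}. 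Summing over $i=0,\dots,k$, telescoping the $\Sigma_1$- and Euclidean-weighted squared distances, discarding the nonnegative remainders $-\|x^{k+1}-x\|_{\Sigma_1}^2$, $-\|z^{k+1}-z\|^2$, and dividing by $k+1$ yields \eqref{Eq:Thm1}; specializing $(x,z)=(x^\star,z^\star)$ and noting that the exact optimum carries no perturbation, $r_x^\star=r_z^\star=0$, gives the second display.

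The main obstacle will be the cross-term bookkeeping in the recombination step — controlling the leftover $B(z^{k+1}-z^k)$ coupling against the $\Sigma_2$-weighted proximal residual with the right choice of splitting parameter so that the weight collapses cleanly to the identity (and checking that the positive-definiteness margins of \textbf{P.3} suffice for this), together with verifying that the perturbation inner products telescope down to a single endpoint term — and, in the lower-$\mathcal{C}^2$ setting, confirming that $\Sigma_1$ and the $z$-side proximal weight dominate the local concavity modulus so that no negative-curvature term survives in the final bound.
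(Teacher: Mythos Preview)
Your overall strategy matches the paper's proof closely: $\varepsilon$-subgradient inequalities from the approximate proximals, expansion of $\Gamma_{1_k},\Gamma_{2_k}$, the three-point identity, insertion of the additive errors $r_x^k,r_z^k$, telescoping from $0$ to $k$, use of $r_x^0=r_z^0=0$, dropping nonnegative remainders, and division by $k+1$.

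There is, however, one algebraic slip that would stall the recombination step. You identify $u^{i+1}$ with the dual iterate $v^{i+1}$ and then claim that the leftover cross term $\tfrac{1}{\lambda}\langle LB(z^{k+1}-z^k),A(x^{k+1}-x)\rangle$ recombines with the $\Sigma_2$-weighted proximal term via $\Sigma_2+\tfrac{1}{\lambda}B^\top LB=I$. But that leftover carries $A(x^{k+1}-x)$, not $z^{k+1}-z$, so it cannot be paired with $\langle\Sigma_2(z^k-z^{k+1}),z^{k+1}-z\rangle$. The paper instead \emph{defines} $u^{k+1}:=v^{k+1}+B(z^k-z^{k+1})$ and moves the whole quantity $\tfrac{1}{\lambda}\langle Lu^{k+1},A(x^{k+1}-x)+B(z^{k+1}-z)\rangle$ to the left. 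With this grouping the leftover on the right becomes $\tfrac{1}{\lambda}\langle B^\top LB(z^k-z^{k+1}),z^{k+1}-z\rangle$, which is exactly what cancels the $-\tfrac{1}{\lambda}B^\top LB$ inside $\Sigma_2$ and leaves the plain Euclidean telescope $\tfrac12\|z^k-z\|^2-\tfrac12\|z^{k+1}-z\|^2$. So the variable $u^{i+1}$ appearing in the theorem statement is not $v^{i+1}$; it is $v^{i+1}$ plus the $z$-lag correction $B(z^i-z^{i+1})$, and that shift is precisely what makes your intended $\Sigma_2\to I$ collapse go through.

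A side remark: your plan to absorb a local concavity modulus into $\Sigma_1,\Sigma_2$ in the lower-$\mathcal{C}^2$ case is more careful than what the paper actually does; the paper simply invokes the subgradient inequalities of Definitions~\ref{Def:Subgradient}--\ref{Def:e-Subgradient} directly, without introducing or controlling any curvature correction term.
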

\begin{proof}
From Lemma~\ref{lemma:Prox2SubGrad} we have
\begin{align}
    &\Gamma_{1_k}-x^{k+1} \in { \partial g}(x^{k+1})\\
    &\Gamma_{2_k}-z^{k+1} \in  { \partial h}(z^{k+1})\\
    &v^{k+1} = v^k+(Ax^{k+1}+Bz^{k+1}-c)
\end{align}
and from Definition~\ref{Def:Subgradient} the following inequalities hold:
\begin{align}
\label{Ineq:subgrad_g_h}
    &g(x^{k+1})-g(x)\leq\langle\Gamma_{1_k}-x^{k+1},x^{k+1}-x\rangle\\
    &h(z^{k+1})-h(z)\leq\langle\Gamma_{2_k}-z^{k+1},z^{k+1}-z\rangle\\
    &v^{k+1} = v^k+(Ax^{k+1}+Bz^{k+1}-c)
\end{align}
Using Definition~\ref{Def:e-Subgradient}, \eqref{Ineq:subgrad_g_h} can be extended to the approximate WLM-ADMM algorithm \label{WLM-ADMMProx2} as follows  
\begin{align}
\label{Ineq:e-subgrad_g_h}
    &g(x^{k+1})-g(x)\leq\langle\Gamma_{1_k}-x^{k+1},x^{k+1}-x\rangle+\varepsilon_g^{k+1}\\
    &h(z^{k+1})-h(z)\leq\langle\Gamma_{2_k}-z^{k+1},z^{k+1}-z\rangle +\varepsilon_h^{k+1}\\
    &v^{k+1} = v^k+(Ax^{k+1}+Bz^{k+1}-c)
\end{align}
Expanding $\Gamma_{1_k}$ and $\Gamma_{2_k}$ and using $\Sigma_1 = \big(I-\frac{1}{\lambda}A^\top L  A\big)$ and $\Sigma_2 = \big(I-\frac{1}{\lambda}B^\top L  B\big)$ we obtain
\begin{align}
\label{Ineq:e-subgrad_g_h}
    &g(x^{k+1})-g(x)\leq\langle\Sigma_1(x^k-x^{k+1}),x^{k+1}-x\rangle\\
    &-\langle\frac{1}{\lambda}A^\top L  u^{k+1},x^{k+1}-x\rangle+\varepsilon_g^{k+1}\\
    &h(z^{k+1})-h(z)\leq\langle\Sigma_2(z^k-z^{k+1}),z^{k+1}-z\rangle\\
    &-\langle\frac{1}{\lambda}B^\top L  v^{k+1},z^{k+1}-z\rangle+\varepsilon_h^{k+1}\\
    &v^{k+1} = v^k+(Ax^{k+1}+Bz^{k+1}-c)
\end{align}
where $u^{k+1} = v^{k+1}+B(z^k-z^{k+1})$. Adding both sides of the last inequalities we obtain 
\begin{align}
\label{Ineq:e-subgrad_g_h}
    &f(x^{k+1},z^{k+1})-f(x,z)\leq\langle\Sigma_1(x^k-x^{k+1}),x^{k+1}-x\rangle\\
    &+\langle\Sigma_2(z^k-z^{k+1}),z^{k+1}-z\rangle+\varepsilon_g^{k+1}+\varepsilon_h^{k+1}\\
    &-\langle\frac{1}{\lambda}L  u^{k+1},Ax^{k+1}+Bz^{k+1}-(Ax+Bz)\rangle\\
    &+\langle\frac{1}{\lambda}B^\top L  B(z^k-z^{k+1}),z^{k+1}-z\rangle\\
\end{align}
Substituting for $\Sigma_2$ with $\big(I-\frac{1}{\lambda}B^\top L  B\big)$ yields
\begin{align}
\label{Ineq:e-subgrad_g_h}
    &f(x^{k+1},z^{k+1})-f(x,z)\leq\langle\Sigma_1(x^k-x^{k+1}),x^{k+1}-x\rangle\\
    &+\langle z^k-z^{k+1},z^{k+1}-z\rangle+\varepsilon_g^{k+1}+\varepsilon_h^{k+1}\\
    &-\langle\frac{1}{\lambda}L  u^{k+1},Ax^{k+1}+Bz^{k+1}-(Ax+Bz)\rangle
\end{align}
Adding error terms from inexact iterates and using error model \textbf{M.1}
\begin{align}
\label{Ineq:e-subgrad_g_h}
    &f(x^{k+1},z^{k+1})-f(x,z)\\
    &+\langle\frac{1}{\lambda}L  u^{k+1},Ax^{k+1}+Bz^{k+1}-(Ax+Bz)\rangle\leq\\
    &\frac{1}{2}\|x^k-x\|^2_{\Sigma_1}-\frac{1}{2}\|x^{k+1}-x\|^2_{\Sigma_1}\\
    &+\frac{1}{2}\|z^k-z\|^2-\frac{1}{2}\|z^{k+1}-z\|^2+\varepsilon_g^{k+1}+\varepsilon_h^{k+1}\\
    &+\langle\Sigma_1 r_x^k,x^k-x \rangle+\langle r_z^k,z^k-z \rangle-\langle \Sigma_1r_x^{k+1},x^{k+1}-x \rangle\\
    &-\langle r_z^{k+1},z^{k+1}-z \rangle\\
    &+\frac{1}{2}\big(\|\Sigma_1 r_x^{k}\|^2+\|r_z^{k}\|^2-\|\Sigma_1 r_x^{k+1}\|^2-\|r_z^{k+1}\|^2\big)
\end{align}
Summing both sides of the inequality from $0$ to $k$ yields
\begin{align}
\label{Ineq:e-subgrad_g_h}
    &\sum_{i=0}^{k}f(x^{i+1},z^{i+1})-(k+1)f(x,z)\\
    &+\sum_{i=0}^{k}\langle\frac{1}{\lambda}L  u^{i+1},Ax^{i+1}+Bz^{i+1}-(Ax+Bz)\rangle\leq\\
    &\frac{1}{2}\|x^0-x\|^2_{\Sigma_1}-\frac{1}{2}\|x^{k+1}-x\|^2_{\Sigma_1}\\
    &+\frac{1}{2}\|z^0-z\|^2-\frac{1}{2}\|z^{k+1}-z\|^2+\sum_{i=0}^{k}\varepsilon_g^{i+1}+\sum_{i=0}^{k}\varepsilon_h^{i+1}\\
    &+\langle\Sigma_1 r_x^0,x^0-x \rangle+\langle r_z^0,z^0-z \rangle-\langle \Sigma_1r_x^{k+1},x^{k+1}-x \rangle\\
    &-\langle r_z^{k+1},z^{k+1}-z \rangle\\
    &+\frac{1}{2}\big(\|\Sigma_1 r_x^{0}\|^2+\|r_z^{0}\|^2-\|\Sigma_1 r_x^{k+1}\|^2-\|r_z^{k+1}\|^2\big)
\end{align}
where we have chosen $L = L = constant$. Using the initial conditions of error model \textbf{M.1}; i.e., $r_x^0 = r_z^0 = 0$, we obtain 
\begin{align}
\label{Ineq:e-subgrad_g_h}
    &\sum_{i=0}^{k}f(x^{i+1},z^{i+1})-(k+1)f(x,z)\\
    &+\sum_{i=0}^{k}\langle\frac{1}{\lambda}L  u^{i+1},Ax^{i+1}+Bz^{i+1}-(Ax+Bz)\rangle\leq\\
    &\frac{1}{2}\Big[\|x^0-x\|^2_{\Sigma_1}+\|z^0-z\|^2-\|x^{k+1}-x\|^2_{\Sigma_1}-\|z^{k+1}-z\|^2\Big]\\
    &+\sum_{i=0}^{k}\varepsilon_g^{i+1}+\sum_{i=0}^{k}\varepsilon_h^{i+1}-\frac{1}{2}\big(\|\Sigma_1 r_x^{k+1}\|^2+\|r_z^{k+1}\|^2\big)\\
    &-\langle \Sigma_1r_x^{k+1},x^{k+1}-x \rangle-\langle r_z^{k+1},z^{k+1}-z \rangle
\end{align}
Dividing both sides by $k+1$ completes the proof.
\begin{align}
\label{Ineq:e-subgrad_g_h}
    &\frac{1}{k+1}\sum_{i=0}^{k}f(x^{i+1},z^{i+1})-f(x,z)\\
    &+\frac{1}{k+1}\sum_{i=0}^{k}\langle\frac{1}{\lambda}L  u^{i+1},Ax^{i+1}+Bz^{i+1}-(Ax+Bz)\rangle\leq\\
    &\frac{1}{2(k+1)}\Big[\|x^0-x\|^2_{\Sigma_1}+\|z^0-z\|^2\Big]\\
    &+\frac{1}{k+1}\Big[\sum_{i=0}^{k}\varepsilon_g^{i+1}+\sum_{i=0}^{k}\varepsilon_h^{i+1}-\langle \Sigma_1r_x^{k+1},x^{k+1}-x \rangle\\&-\langle r_z^{k+1},z^{k+1}-z \rangle\Big]
\end{align}
where we have dropped negative terms from the right hand side.
\end{proof}
\begin{theorem}[Deterministic bounds for the nonconvex case]
\label{Theorem2}
Assume \textbf{P.1}, \textbf{P.3} and \textbf{M.1}, then the sequence generated using the WLM-ADMM scheme \eqref{WLM-ADMMProx} with $\lambda_x=\lambda_z=1$ and $L = L$ satisfies the following
\begin{align}
    &\frac{1}{k+1}\sum_{i=0}^{k}f(x^{i+1},z^{i+1})-f(x^\star,z^\star)\\
    &+\frac{1}{k+1}\sum_{i=0}^{k}\langle\frac{1}{\lambda}L  u^{i+1},v^{k+1}-v^k\rangle\leq\\
    &\frac{1}{2(k+1)}\Big[\|x^0-x^\star\|^2_{\Sigma_1}+\|z^0-z^\star\|^2\Big]\\
    &+\frac{1}{k+1}\Big[\sum_{i=0}^{k}\varepsilon_g^{i+1}+\sum_{i=0}^{k}\varepsilon_h^{i+1}+\|\Sigma_1 r_x^{k+1}\|\|x^{k+1}-x^\star\|\\&+\| r_z^{k+1}\|\|z^{k+1}-z^\star\|\Big]
\end{align}
\begin{proof}
Applying Cauchy-Schwarz to \eqref{Eq:Thm1} with $(x^{\text{ref}},z^{\text{ref}}) = (x^\star,z^\star)$ yields
\begin{align}
    &\frac{1}{k+1}\sum_{i=0}^{k}f(x^{i+1},z^{i+1})-f(x^\star,z^\star)\\
    &+\frac{1}{k+1}\sum_{i=0}^{k}\langle\frac{1}{\lambda}L  u^{i+1},v^{k+1}-v^k\rangle\leq\\
    &\frac{1}{2(k+1)}\Big[\|x^0-x^\star\|^2_{\Sigma_1}+\|z^0-z^\star\|^2\Big]\\
    &+\frac{1}{k+1}\Big[\sum_{i=0}^{k}\varepsilon_g^{i+1}+\sum_{i=0}^{k}\varepsilon_h^{i+1}+\|\Sigma_1 r_x^{k+1}\|\|x^{k+1}-x^\star\|\\&+\| r_z^{k+1}\|\|z^{k+1}-z^\star\|\Big]
\end{align}
which completes the proof.
\end{proof}
\end{theorem}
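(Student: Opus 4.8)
The plan is to obtain Theorem~\ref{Theorem2} as an immediate corollary of Theorem~1 by specializing the free feasible pair $(x,z)$ in \eqref{Eq:Thm1} to an optimal primal pair $(x^\star,z^\star)$ and then replacing the two signed cross terms on the right-hand side with their Cauchy--Schwarz upper bounds. First I would invoke Theorem~1, which under \textbf{P.1}, \textbf{P.3} and \textbf{M.1} holds for every $(x,z)$ with $Ax+Bz=c$; since $(x^\star,z^\star)$ is feasible it is an admissible choice, and at this reference point the associated residuals vanish, $r_x^\star=r_z^\star=0$, so the terms $-\langle\Sigma_1(r_x^{k+1}-r_x),x^{k+1}-x\rangle$ and $-\langle r_z^{k+1}-r_z,z^{k+1}-z\rangle$ collapse to $-\langle\Sigma_1 r_x^{k+1},x^{k+1}-x^\star\rangle$ and $-\langle r_z^{k+1},z^{k+1}-z^\star\rangle$, exactly the $(x^\star,z^\star)$-specialized display stated inside Theorem~1. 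On the left-hand side, feasibility $Ax^\star+Bz^\star=c$ together with the dual update in \eqref{WLM-ADMMProx} turns $Ax^{i+1}+Bz^{i+1}-(Ax^\star+Bz^\star)$ into $v^{i+1}-v^i$, so the running inner-product term is $\langle\frac{1}{\lambda}L u^{i+1},v^{k+1}-v^k\rangle$ as written.

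Next I would bound the two remaining cross terms from above. By \textbf{P.3}, $\Sigma_1 = I-\frac{1}{\lambda}A^\top L A$ (the $\lambda_x=1$ instance of the matrix in \eqref{WLM-ADMMProx2Norm}) is symmetric and, under the spectral condition $\lambda>\|A^\top L A\|$, positive definite; in particular $\Sigma_1 r_x^{k+1}$ is a well-defined vector, and the ordinary Cauchy--Schwarz inequality gives
\[
  -\langle\Sigma_1 r_x^{k+1},x^{k+1}-x^\star\rangle
  \le \big|\langle\Sigma_1 r_x^{k+1},x^{k+1}-x^\star\rangle\big|
  \le \|\Sigma_1 r_x^{k+1}\|\,\|x^{k+1}-x^\star\|,
\]
and likewise $-\langle r_z^{k+1},z^{k+1}-z^\star\rangle \le \|r_z^{k+1}\|\,\|z^{k+1}-z^\star\|$. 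Substituting these into the bracketed error term of the $(x^\star,z^\star)$-specialized display and retaining the prefactor $1/(k+1)$ reproduces the claimed inequality verbatim. Note that no convexity of $g$ or $h$ and no structure of the constraints is used in this step, which is precisely why the bound remains valid in the nonconvex lower-$\mathcal{C}^2$ regime under \textbf{P.1} with no hypotheses beyond those already consumed in Theorem~1.

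There is no real obstacle here --- the work is entirely in Theorem~1 --- but three bookkeeping points deserve a quick check: (i) the Cauchy--Schwarz step must be applied to $\Sigma_1 r_x^{k+1}$ as a single vector, so the bound is in terms of $\|\Sigma_1 r_x^{k+1}\|$ rather than the looser $\|\Sigma_1\|\,\|r_x^{k+1}\|$; (ii) the negative quadratic residuals $-\frac{1}{2}(\|\Sigma_1 r_x^{k+1}\|^2+\|r_z^{k+1}\|^2)$ and $-\frac{1}{2}(\|x^{k+1}-x^\star\|_{\Sigma_1}^2+\|z^{k+1}-z^\star\|^2)$ already dropped in the proof of Theorem~1 remain legitimately droppable, so nothing is lost by working from \eqref{Eq:Thm1} in its stated (already-simplified) form; and (iii) the identification $r_x^\star=r_z^\star=0$ relies on the noise-free treatment of the optimal point in \textbf{M.1}, the same convention used to obtain the $(x^\star,z^\star)$ display of Theorem~1. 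With these verified, the proof is the single line of Cauchy--Schwarz that the authors indicate.
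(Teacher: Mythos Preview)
Your proposal is correct and follows exactly the paper's own proof: specialize \eqref{Eq:Thm1} at the feasible optimum $(x^\star,z^\star)$, use $r_x^\star=r_z^\star=0$, and apply Cauchy--Schwarz to the two cross terms. Your additional bookkeeping remarks (i)--(iii) are accurate and simply make explicit what the paper's one-line proof leaves implicit.
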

\begin{corollary}[Deterministic bounds for the convex case]
\label{Corol2}
Assume \textbf{P.2}, \textbf{P.3} and \textbf{M.1}, then the sequence generated using the WLM-ADMM scheme \eqref{WLM-ADMMProx} with $\lambda_x=\lambda_z=1$ and $L = L$ satisfies the following
\begin{align}
    &f\bigg(\frac{1}{k+1}\sum_{i=0}^{k}x^{i+1},\frac{1}{k+1}\sum_{i=0}^{k}z^{i+1}\bigg)-f(x^\star,z^\star)\\
    &+\frac{1}{k+1}\sum_{i=0}^{k}\langle\frac{1}{\lambda}L  u^{i+1},v^{k+1}-v^k\rangle\leq\\
    &\frac{1}{2(k+1)}\Big[\|x^0-x^\star\|^2_{\Sigma_1}+\|z^0-z^\star\|^2\Big]\\
    &+\frac{1}{k+1}\Big[\sum_{i=0}^{k}\varepsilon_g^{i+1}+\sum_{i=0}^{k}\varepsilon_h^{i+1}+\sqrt{2\varepsilon_h^{k+1}}\|z^{k+1}-z^\star\|\\&+\sqrt{2\lambda_{\max} (\Sigma_1^\top\Sigma_1)\varepsilon_g^{k+1}}\|x^{k+1}-x^\star\|\Big]
\end{align}
\end{corollary}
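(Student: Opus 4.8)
The plan is to take the ergodic estimate of Theorem~\ref{Theorem2} as the point of departure and to sharpen it in two independent ways, exploiting the convexity hypothesis \textbf{P.2} and the approximate--proximal description of the inexact updates in \eqref{WLM-ADMMProx}. Theorem~\ref{Theorem2} already establishes, at the reference point $(x^\star,z^\star)$, an upper bound on $\frac{1}{k+1}\sum_{i=0}^{k}f(x^{i+1},z^{i+1})-f(x^\star,z^\star)$ plus the dual inner-product term, so the remaining work is purely to post-process its two sides.

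The first refinement acts on the left-hand side. Under \textbf{P.2} the map $f(x,z)=g(x)+h(z)$ is convex on $\mathbb{R}^n\times\mathbb{R}^m$, so Jensen's inequality yields $f\big(\frac{1}{k+1}\sum_{i=0}^{k}x^{i+1},\frac{1}{k+1}\sum_{i=0}^{k}z^{i+1}\big)\leq\frac{1}{k+1}\sum_{i=0}^{k}f(x^{i+1},z^{i+1})$. Hence the running average of the per-iteration values appearing in Theorem~\ref{Theorem2} dominates the objective evaluated at the Ces\`aro average of the iterates, and the bound transfers verbatim to that averaged point (the dual inner-product term is untouched, since it is not rewritten in the corollary's statement).

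The second refinement replaces the residual-norm factors $\|\Sigma_1 r_x^{k+1}\|$ and $\|r_z^{k+1}\|$ on the right-hand side by explicit functions of the proximal accuracies $\varepsilon_g^{k+1}$ and $\varepsilon_h^{k+1}$. With $\lambda_x=\lambda_z=1$ and with the particular choice of $M_x^k,M_z^k$ that defines \eqref{WLM-ADMMProx}, the matrices $\Lambda_{1_k},\Lambda_{2_k}$ collapse to the identity, so the $x$- and $z$-subproblems minimise the $1$-strongly convex maps $x\mapsto g(x)+\frac{1}{2}\|x-\Gamma_{1_k}\|^2$ and $z\mapsto h(z)+\frac{1}{2}\|z-\Gamma_{2_k}\|^2$. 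If $x^{k+1}$ is an $\varepsilon_g^{k+1}$-minimiser of the first map in the sense of Definition~\ref{Def:eProx} and $\overline{x}^{k+1}$ is the exact minimiser, then $1$-strong convexity forces $\frac{1}{2}\|x^{k+1}-\overline{x}^{k+1}\|^2$ to be at most the suboptimality gap of $x^{k+1}$, which is in turn at most $\varepsilon_g^{k+1}$; thus $\|r_x^{k+1}\|=\|\overline{x}^{k+1}-x^{k+1}\|\leq\sqrt{2\varepsilon_g^{k+1}}$, and likewise $\|r_z^{k+1}\|\leq\sqrt{2\varepsilon_h^{k+1}}$. Sub-multiplicativity of the spectral norm then gives $\|\Sigma_1 r_x^{k+1}\|\leq\sqrt{\lambda_{\max}(\Sigma_1^\top\Sigma_1)}\,\|r_x^{k+1}\|\leq\sqrt{2\lambda_{\max}(\Sigma_1^\top\Sigma_1)\,\varepsilon_g^{k+1}}$.

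Substituting these two facts back into Theorem~\ref{Theorem2} --- Jensen on the left, the two residual estimates on the right, with all other terms left unchanged --- produces exactly the claimed inequality. I expect the only delicate step to be the strong-convexity estimate $\|r_x^{k+1}\|\leq\sqrt{2\varepsilon_g^{k+1}}$: one must confirm that, under $\lambda_x=\lambda_z=1$ and \textbf{P.3}, the effective proximal objectives are genuinely $1$-strongly convex, and that the quadratic-growth bound at the minimiser of a strongly convex function pairs correctly with the additive $\varepsilon$ slack in the definition of the approximate proximal operator. Everything else is routine algebraic substitution from Theorem~\ref{Theorem2}.
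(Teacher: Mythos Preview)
Your proposal is correct and follows essentially the same approach as the paper: start from Theorem~\ref{Theorem2}, bound the residual norms $\|r_x^{k+1}\|,\|r_z^{k+1}\|$ via the $1$-strong convexity of the proximal objectives (the paper packages this step as Lemma~\ref{Lem:BackwardProxError}, whereas you derive it inline), pass to $\|\Sigma_1 r_x^{k+1}\|$ by sub-multiplicativity, and invoke Jensen's inequality for the ergodic average on the left. The only cosmetic difference is that the paper applies Jensen last while you apply it first; the logic is identical.
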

\begin{proof}
Applyimg Lemma~\ref{Lem:BackwardProxError} yields 
\begin{equation}
    \|\Sigma_1 r_x^{k+1}\|\leq\|\Sigma_1\|\sqrt{2\varepsilon_g^{k+1}}\leq\sqrt{2\lambda_{\max} (\Sigma_1^\top\Sigma_1)\varepsilon_g^{k+1}},
\end{equation}
and
\begin{equation}
    \|r_z^{k+1}\|\leq\sqrt{2\varepsilon_g^{k+1}}.
\end{equation}
Using these inequalities we obtain
\begin{align}
    &\frac{1}{k+1}\sum_{i=0}^{k}f(x^{i+1},z^{i+1})-f(x^\star,z^\star)
    \\
    &+\frac{1}{k+1}\sum_{i=0}^{k}\langle\frac{1}{\lambda}L  u^{i+1},v^{k+1}-v^k\rangle\leq
    \\
    &\frac{1}{2(k+1)}\Big[\|x^0-x^\star\|^2_{\Sigma_1}+\|z^0-z^\star\|^2\Big]
    \\
    &+\frac{1}{k+1}\Big[\sum_{i=0}^{k}\varepsilon_g^{i+1}+\sum_{i=0}^{k}\varepsilon_h^{i+1}+\sqrt{2\varepsilon_h^{k+1}}\|z^{k+1}-z^\star\|
    \\
    &+\sqrt{2\lambda_{\max} (\Sigma_1^\top\Sigma_1)\varepsilon_g^{k+1}}\|x^{k+1}-x^\star\|\Big]
\end{align}
Using Jensen's inequality (Lemma~\ref{lem:jensen}) completes the proof.
\end{proof}
\subsection{Scenario 2: Approximate ADMM with random errors}\label{scenario2}
In this scenario, we assume additive random error models \textbf{M.2}, \textbf{M.3} and \textbf{M.4}. We present probabilistic suboptimal convergence results for the nonconvex case as well as the convex case under assumptions \textbf{P.1} and \textbf{P.2}. For the convex case, we show that suboptimal convergence is only guaranteed under boundedness assumption on the proximal errors \textbf{M.3}. For the nonconvex case, additional boundedness assumption on the residuals is required to guarantee suboptimal convergence. 

The next two results will be proved together after Corollary~\ref{Corol2}. 
\begin{theorem}[Probabilistic bounds for the nonconvex case]
\label{Theorem3}
Assume \textbf{P.1}, \textbf{P.3}, \textbf{M.3} and \textbf{M.4}, then the sequence generated using the WLM-ADMM scheme \eqref{WLM-ADMMProx} with $\lambda_x=\lambda_z=1$ and $L = L$ satisfies the following
\begin{align}
    &\frac{1}{k+1}\sum_{i=0}^{k}f(x^{i+1},z^{i+1})-f(x^\star,z^\star)\\
    &+\frac{1}{k+1}\sum_{i=0}^{k}\langle\frac{1}{\lambda}L  u^{i+1},v^{k+1}-v^k\rangle\leq\mathbb{E}[\varepsilon_{g_\Omega}]+\mathbb{E}[\varepsilon_{h_\Omega}]\\
    &+\frac{1}{2(k+1)}\Big[\|x^0-x^\star\|^2_{\Sigma_1}+\|z^0-z^\star\|^2+\|r_z^{k+1}\|\|z^{k+1}-z^\star\|\\&+\|\Sigma_1 r_x^{k+1}\|\|x^{k+1}-x^\star\|\Big]+\frac{\gamma}{\sqrt{k+1}}\varepsilon_0
\end{align}
holds with probability at least $1 - 4\exp(-\frac{\gamma^2}{2})$ for any $\gamma >0$.
\end{theorem}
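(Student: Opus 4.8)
The plan is to re-run the argument of Theorem~1 / Theorem~\ref{Theorem2} but under the random error model \textbf{M.2}, and then to trade the empirical error averages for their stationary means via a martingale concentration bound. First I would repeat every step leading to the master inequality \eqref{Eq:Thm1} with $(x,z)=(x^\star,z^\star)$ and with the random residuals $r_{x_\Omega}^{k},r_{z_\Omega}^{k}$ of \textbf{M.2} in place of the deterministic ones (the telescoping in the proof of Theorem~1 still closes because $r_{x_\Omega}^{0}=r_{z_\Omega}^{0}=0$), and then apply Cauchy--Schwarz exactly as in Theorem~\ref{Theorem2} to the two cross terms. This yields, pathwise, the deterministic-looking bound whose right-hand side contains $\frac{1}{k+1}\big(\sum_{i=0}^{k}\varepsilon_{g_\Omega}^{i+1}+\sum_{i=0}^{k}\varepsilon_{h_\Omega}^{i+1}\big)$, the initialization terms $\|x^0-x^\star\|^2_{\Sigma_1}$, $\|z^0-z^\star\|^2$, and the two leftover residual products $\|\Sigma_1 r_{x_\Omega}^{k+1}\|\,\|x^{k+1}-x^\star\|$ and $\|r_{z_\Omega}^{k+1}\|\,\|z^{k+1}-z^\star\|$. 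The only genuinely new work is to control the two error sums probabilistically; the initialization and residual contributions are simply collected into the $\mathcal{O}(1/(k+1))$ bracket of the statement.

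For the concentration step I would introduce the natural filtration $\mathcal{F}_i=\sigma(\varepsilon_{g_\Omega}^{1},\varepsilon_{h_\Omega}^{1},\dots,\varepsilon_{g_\Omega}^{i},\varepsilon_{h_\Omega}^{i})$, with respect to which the WLM-ADMM iterates $x^{i},z^{i},v^{i}$ are measurable. Combining the conditional-mean-independence hypothesis with stationarity \textbf{M.4} gives $\mathbb{E}[\varepsilon_{g_\Omega}^{i+1}\mid\mathcal{F}_i]=\mathbb{E}[\varepsilon_{g_\Omega}]$ and $\mathbb{E}[\varepsilon_{h_\Omega}^{i+1}\mid\mathcal{F}_i]=\mathbb{E}[\varepsilon_{h_\Omega}]$, so the sequences $d_i^{g}:=\varepsilon_{g_\Omega}^{i+1}-\mathbb{E}[\varepsilon_{g_\Omega}]$ and $d_i^{h}:=\varepsilon_{h_\Omega}^{i+1}-\mathbb{E}[\varepsilon_{h_\Omega}]$ are martingale difference sequences. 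By the almost-sure bound \textbf{M.3}, each $d_i^{g}$ and $d_i^{h}$ ranges over an interval of length $\varepsilon_0$, so the bounded-range Azuma--Hoeffding inequality gives, for any $s>0$, $\Pr\big(\sum_{i=0}^{k}d_i^{g}\ge s\big)\le\exp\!\big(-2s^2/((k+1)\varepsilon_0^2)\big)$, and the same for $-d_i^{g}$, for $d_i^{h}$, and for $-d_i^{h}$. Taking $s=\tfrac12\gamma\varepsilon_0\sqrt{k+1}$ makes each of these four tail probabilities equal to $\exp(-\gamma^2/2)$, so a union bound shows that with probability at least $1-4\exp(-\gamma^2/2)$ one has simultaneously $\frac{1}{k+1}\sum_{i=0}^{k}\varepsilon_{g_\Omega}^{i+1}\le\mathbb{E}[\varepsilon_{g_\Omega}]+\tfrac{\gamma\varepsilon_0}{2\sqrt{k+1}}$ and $\frac{1}{k+1}\sum_{i=0}^{k}\varepsilon_{h_\Omega}^{i+1}\le\mathbb{E}[\varepsilon_{h_\Omega}]+\tfrac{\gamma\varepsilon_0}{2\sqrt{k+1}}$. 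Substituting these into the pathwise master inequality and adding the two deviations into the single term $\tfrac{\gamma}{\sqrt{k+1}}\varepsilon_0$ produces exactly the claimed bound, valid with the stated probability.

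The main obstacle — really the only delicate point — is the martingale set-up: one must verify that the filtration $\{\mathcal{F}_i\}$ can be chosen so that the iterates at step $i$ are $\mathcal{F}_i$-measurable \emph{and} the conditional-mean-independence/stationarity assumptions still force $\mathbb{E}[\varepsilon_{g_\Omega}^{i+1}\mid\mathcal{F}_i]$ and $\mathbb{E}[\varepsilon_{h_\Omega}^{i+1}\mid\mathcal{F}_i]$ to equal the corresponding constant means, which is what upgrades $d_i^{g},d_i^{h}$ from merely mean-zero to genuine martingale differences and thereby licenses Azuma--Hoeffding. Once this is granted, the almost-sure boundedness \textbf{M.3} supplies the bounded-increment condition and the rest is bookkeeping. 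Note that, unlike in the convex case, the residual products $\|\Sigma_1 r_{x_\Omega}^{k+1}\|\,\|x^{k+1}-x^\star\|$ and $\|r_{z_\Omega}^{k+1}\|\,\|z^{k+1}-z^\star\|$ receive no probabilistic treatment here and are simply retained inside the $\mathcal{O}(1/(k+1))$ bracket; this is precisely why, for the bound to remain informative as $k\to\infty$, one additionally needs the residuals (and hence the iterates) to stay bounded.
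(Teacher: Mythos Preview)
Your proposal is correct and follows the same overall route as the paper: start from the deterministic bound of Theorem~\ref{Theorem2}, replace the empirical error sums $\sum_{i=0}^{k}\varepsilon_{g_\Omega}^{i+1}$ and $\sum_{i=0}^{k}\varepsilon_{h_\Omega}^{i+1}$ by $(k+1)$ times their stationary means plus a deviation of order $\sqrt{k+1}\,\varepsilon_0$ via a concentration inequality, and then union-bound over the two sequences before substituting back. The only substantive difference is the concentration tool. The paper simply invokes the independent-case Hoeffding inequality (Lemma~\ref{lem:4}) on each error sequence and then applies Lemma~\ref{lem:sumprob}; you instead set up the natural filtration and use Azuma--Hoeffding on the martingale differences $d_i^{g}=\varepsilon_{g_\Omega}^{i+1}-\mathbb{E}[\varepsilon_{g_\Omega}]$, $d_i^{h}=\varepsilon_{h_\Omega}^{i+1}-\mathbb{E}[\varepsilon_{h_\Omega}]$. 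Your version is actually the more careful one: assumptions \textbf{M.3}--\textbf{M.4} as written give almost-sure boundedness and stationary means but not full independence, so the martingale formulation is precisely what the paper's ``conditional mean independent'' framing warrants, whereas the paper's direct appeal to Lemma~\ref{lem:4} tacitly assumes more. Apart from this point, the arithmetic---the choice $s=\tfrac12\gamma\varepsilon_0\sqrt{k+1}$, the factor $4$ in the probability from combining the two (two-sided) tail bounds, and the final substitution into the master inequality---is identical to the paper's.
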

\begin{corollary}[Probabilistic bounds for the convex case]
Assume \textbf{P.2}, \textbf{P.3}, \textbf{M.3} and \textbf{M.4},  then the following holds
\label{Corol3}
\begin{align}
    &f\bigg(\frac{1}{k+1}\sum_{i=0}^{k}x^{i+1},\frac{1}{k+1}\sum_{i=0}^{k}z^{i+1}\bigg)-f(x^\star,z^\star)\\
    &+\frac{1}{k+1}\sum_{i=0}^{k}\langle\frac{1}{\lambda}L  u^{i+1},v^{k+1}-v^k\rangle\leq\mathbb{E}[\varepsilon_{g_\Omega}]+\mathbb{E}[\varepsilon_{h_\Omega}]\\
    &+\frac{1}{2(k+1)}\Big[\|x^0-x^\star\|^2_{\Sigma_1}+\|z^0-z^\star\|^2+2\sqrt{2\varepsilon_h^{k+1}}\|z^{k+1}-z^\star\|\\&+2\sqrt{2\lambda_{\max} (\Sigma_1^\top\Sigma_1)\varepsilon_g^{k+1}}\|x^{k+1}-x^\star\|\Big]\\ &+\frac{\gamma}{\sqrt{k+1}}\varepsilon_0
\end{align}
holds with probability at least $1 - 4\exp(-\frac{\gamma^2}{2})$ for any $\gamma >0$.
\end{corollary}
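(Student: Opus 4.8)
The plan is to re-run the argument behind Theorem~1 pathwise, with the deterministic error model \textbf{M.1} replaced by the random model \textbf{M.2}, and then to trade the empirical error averages for their expectations via a concentration inequality that uses only \textbf{M.3} and \textbf{M.4}.

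First I would repeat, verbatim, the chain of inequalities in the proof of Theorem~1 up to the summed-and-divided-by-$(k+1)$ form \eqref{Eq:Thm1}, now carrying the random quantities $r_{x_\Omega}^{i+1}$, $r_{z_\Omega}^{i+1}$, $\varepsilon_{g_\Omega}^{i+1}$, $\varepsilon_{h_\Omega}^{i+1}$ in place of their deterministic counterparts: the $\varepsilon$-subgradient inequalities, the telescoping of the $\Sigma_1$- and identity-weighted squared distances, the noise-free initialisation $r_x^0=r_z^0=0$, and the discarding of the negative square terms are all almost-sure manipulations and go through unchanged, yielding \eqref{Eq:Thm1} with $(x,z)=(x^\star,z^\star)$ and random errors, almost surely. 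For Theorem~\ref{Theorem3} I then bound the two residual inner products by $\|\Sigma_1 r_x^{k+1}\|\,\|x^{k+1}-x^\star\|$ and $\|r_z^{k+1}\|\,\|z^{k+1}-z^\star\|$ via Cauchy--Schwarz, exactly as in Theorem~\ref{Theorem2}; for Corollary~\ref{Corol3} I instead invoke Lemma~\ref{Lem:BackwardProxError} to replace $\|\Sigma_1 r_x^{k+1}\|$ and $\|r_z^{k+1}\|$ by $\sqrt{2\lambda_{\max}(\Sigma_1^\top\Sigma_1)\varepsilon_g^{k+1}}$ and $\sqrt{2\varepsilon_h^{k+1}}$, and then move the average inside $f$ on the left with Jensen's inequality (Lemma~\ref{lem:jensen}), as in Corollary~\ref{Corol2}.

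The only genuinely new step is controlling the empirical averages $\frac{1}{k+1}\sum_{i=0}^{k}\varepsilon_{g_\Omega}^{i+1}$ and $\frac{1}{k+1}\sum_{i=0}^{k}\varepsilon_{h_\Omega}^{i+1}$. By \textbf{M.3} each summand lies in $[0,\varepsilon_0]$ almost surely, and by \textbf{M.4} together with the conditional-mean-independence hypothesis the centred increments $\varepsilon_{g_\Omega}^{i+1}-\mathbb{E}[\varepsilon_{g_\Omega}]$ form a bounded martingale-difference sequence for the filtration generated by the iterates; Azuma--Hoeffding then gives $\mathbb{P}\big(\frac{1}{k+1}\sum_{i=0}^{k}\varepsilon_{g_\Omega}^{i+1}>\mathbb{E}[\varepsilon_{g_\Omega}]+\frac{\gamma\varepsilon_0}{2\sqrt{k+1}}\big)\le\exp(-\gamma^2/2)$ for any $\gamma>0$, and the same for the $h$-errors. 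A union bound over these tail events (four in the statement's accounting, e.g.\ treating each deviation two-sidedly) yields the confidence $1-4\exp(-\gamma^2/2)$ and the extra term $\frac{\gamma}{\sqrt{k+1}}\varepsilon_0$. Substituting these high-probability estimates into the summed inequality, so that $\mathbb{E}[\varepsilon_{g_\Omega}]+\mathbb{E}[\varepsilon_{h_\Omega}]$ replaces $\frac{1}{k+1}\big(\sum\varepsilon_g^{i+1}+\sum\varepsilon_h^{i+1}\big)$, delivers both stated bounds.

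The main obstacle is this concentration step. Independence of $\{\varepsilon_{g_\Omega}^{i+1}\}_i$ is \emph{not} assumed, so plain Hoeffding is unavailable; one must set up a filtration in which the error injected at iteration $i+1$ is measurable with respect to, and has the prescribed conditional mean given, the $\sigma$-algebra generated by the iterates up to iteration $i$, and verify that this is consistent with the data-dependent way $\varepsilon_{g_\Omega}^{i+1}$ and $\varepsilon_{h_\Omega}^{i+1}$ enter \eqref{WLM-ADMMProx} before Azuma--Hoeffding can be applied with the stated constants. A secondary subtlety, specific to the nonconvex Theorem~\ref{Theorem3}, is that the random residual products $\|\Sigma_1 r_x^{k+1}\|\,\|x^{k+1}-x^\star\|$ and $\|r_z^{k+1}\|\,\|z^{k+1}-z^\star\|$ are left in the bound; this is precisely why that result additionally needs residual boundedness to conclude suboptimal convergence, whereas Lemma~\ref{Lem:BackwardProxError} closes this gap in the convex Corollary~\ref{Corol3}.
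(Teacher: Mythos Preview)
Your overall architecture matches the paper's exactly: reuse the deterministic bounds of Theorem~\ref{Theorem2} and Corollary~\ref{Corol2} (which already absorb the $\varepsilon$-subgradient, telescoping, Cauchy--Schwarz, Lemma~\ref{Lem:BackwardProxError} and Jensen steps), then replace the empirical sums $\sum_{i=0}^{k}\varepsilon_{g_\Omega}^{i+1}$ and $\sum_{i=0}^{k}\varepsilon_{h_\Omega}^{i+1}$ by $(k+1)\mathbb{E}[\varepsilon_{g_\Omega}]+\tfrac{\gamma\sqrt{k+1}\,\varepsilon_0}{2}$ and likewise for $h$, and combine the two tail events by a union bound (Lemma~\ref{lem:sumprob}) to land on $1-4\exp(-\gamma^2/2)$.

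The one genuine divergence is the concentration tool. The paper simply invokes the plain Hoeffding inequality (Lemma~\ref{lem:4}) on each bounded sequence, taking $[a,b]=[0,\varepsilon_0]$ and $t=\tfrac{\gamma\sqrt{k+1}\,\varepsilon_0}{2}$, which yields the two-sided bound $2\exp(-\gamma^2/2)$ per sequence and hence the stated $4\exp(-\gamma^2/2)$ after the union. This implicitly treats the $\varepsilon_{(\cdot)_\Omega}^{i}$ as independent; the paper does not pass through a martingale argument. Your route via Azuma--Hoeffding and a filtration generated by the iterates is more cautious and aligns with the ``conditional mean independent'' language in the abstract, but it is not what the paper actually does, and with the crude increment bound $c_i=\varepsilon_0$ it would give a slightly weaker exponent than the paper's Hoeffding constant unless you sharpen the increment range. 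In short: same skeleton, but the paper takes the shorter (and hypothesis-wise less careful) Hoeffding shortcut where you set up a martingale.
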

\begin{proof}
Here $\epsilon_{g_\Omega}$ and $\epsilon_{h_\Omega}$ are bounded almost surely and have stationary means. Specifically, we have $0 \leq \epsilon_{g_\Omega}^k \leq \varepsilon_0$ and $0 \leq \epsilon_{h_\Omega}^k \leq \varepsilon_0$ both hold with probability $1$.  %\begin{proof}
Applying Lemma \ref{lem:4} to both error sequences, we can write,
\begin{equation}
\begin{split}
    \label{Ineq:HoeffBeforeStationarity}
    &\text{Pr}\Bigg(\Bigg|\sum_{i=0}^{k}\epsilon_{(.)_\Omega}^{i+1} - \mathbb{E}\Bigg[\sum_{i=0}^{k}\epsilon_{(.)_\Omega}^{i+1}\Bigg]\Bigg|\geq t\Bigg)\leq 2\exp\bigg(\frac{-2t^2}{(k+1)\varepsilon_0^2}\bigg),\quad \\&\text{for all}\quad t > 0.
\end{split}
\end{equation}
where $Pr$ is a probability measure in the sample space $\Omega$, $(.)$ stands for either $g$ or $h$.

Defining the constant mean  $\mathbb{E}\big[\epsilon_{(.)_\Omega}^k\big] = \mathbb{E}\big[\epsilon_{(.)_\Omega}\big]$ and substituting in \eqref{Ineq:HoeffBeforeStationarity} yields 
\begin{equation}
\begin{split}
    \label{Ineq:HoeffAfterStationarity}
    &\text{Pr}\Bigg(\bigg|\sum_{i=0}^{k}\epsilon_{(.)_\Omega}^{i+1} - k\mathbb{E}\big[\epsilon_{(.)_\Omega}\big]\bigg|\geq t\Bigg)\leq 2\exp\bigg(\frac{-2t^2}{(k+1)\varepsilon_0^2}\bigg),
    \quad
    \\&\text{for all}\quad t > 0.  
\end{split}
\end{equation}
By choosing $t = \frac{\gamma\sqrt{k+1} \varepsilon_0}{2}$, for some $\gamma > 0$, we obtain
\begin{equation}
    \begin{split}
    &\text{Pr}\Bigg(\bigg|\sum_{i=0}^{k}\epsilon_{(.)_\Omega}^i - k\mathbb{E}\big[\epsilon_{(.)_\Omega}\big]\bigg|\geq \frac{\gamma\sqrt{k+1} \varepsilon_0}{2}\Bigg)\leq 2\exp\bigg(\frac{-\gamma^2}{2}\bigg),
    \quad
    \\&\text{for all}\quad \gamma > 0.
    \end{split}
\end{equation}
Equivalently,
\begin{equation}
    \sum_{i=0}^{k}\epsilon_{(.)_\Omega}^i \leq (k+1)\mathbb{E}\big[\epsilon_{(.)_\Omega}\big] + \frac{\gamma\sqrt{k+1} \varepsilon_0}{2}
    \label{e2leq}
\end{equation}
holds with probability at least $1 - 2\exp(-\frac{\gamma^2}{2})$ for any $\gamma >0$. 

Using the last inequality in Theorem~\ref{Theorem2} and Corollary~\ref{Corol2} and applying Lemma~\ref{lem:sumprob} completes the proof of Theorem~\ref{Theorem3} and Corollary~\ref{Corol3}, respectively. 
\end{proof}
% $L_k $ can be any positive definite matrix; however, we can use this freedom of choice to optimize other criteria such as computational and/or data noise attenuation. Using error model \textbf{M.5} 
\section{EXPERIMENTAL RESULTS}
\label{SectionExpResult}
In this section, we validate our proposed technique on regularized inverse problems using the WLM-ADMM scheme with simulated and practical computational inaccuracies. 
\subsection{Experimental setup}
\label{SectionExperiment}

%data
Synthetic data matrix $A\in\mathbb{R}^{m\times n}$ and vector $b\in\mathbb{R}^m$ are randomly generated with $m=500$ and $n=100$. 
%%lasso
%%k-norm

%parameters
To simplify the algorithm verification process, the WL-ADMM scheme \eqref{WLM-ADMMProx} is used with $L=I$, $\rho=1.2$ and $\lambda_x=\lambda_z=1$ in both experiments. The algorithm is also randomly intialized for each experiment.
%%lasso
%%k-norm

%errors
%%lasso 
In the LASSO experiment, we inject simulated computational errors generated from a zero-mean truncated Gaussian distribution. The parameter $\delta$ is used as a noise level parameter that controls the range of the generated error sequences. $\delta \approx 3\sigma$, where $\sigma$ is the standard deviation. The generated error sequences satisfy $|\varepsilon_g^k|\leq\delta|  x^k|$, $|\varepsilon_h^k|\leq\delta|  z^k|$ for all $k\geq1$, and for $k=0$ we have $\varepsilon_g^0 = \varepsilon_h^0 =0$. 

%%k-norm
In the regularized robust regression experiment, two types of approximation errors are introduced. The first approximation error corresponds to the early termination of step \eqref{subproblem_g} which is introduced via CVX's \verb|cvx_precision| parameter. The second approximation error corresponds to loop perforation in step \eqref{subproblem_h} which is introduced via a $skip$ parameter in both inner loops of Algorithm 1 in \cite{argyriou2012sparse}. The error sequences $\varepsilon_g^k$ and $\varepsilon_h^k$ can be calculated as follows
\begin{align}
    \varepsilon_g^k &= \bigg| g(\text{prox}_g^{\varepsilon_g}(\Gamma_{1_k})) + \frac{1}{2}\|\text{prox}_g^{\varepsilon_g}(\Gamma_{1_k}) -\Gamma_{1_k}\|_2^2 \notag\\
    &-g(\text{prox}_g(\Gamma_{1_k}))- \frac{1}{2}\|\text{prox}_g(\Gamma_{1_k}) -
      \Gamma_{1_k}\|_2^2
      \bigg|
\end{align}
\begin{align}
    \varepsilon_h^k &= \bigg| h(\text{prox}_h^{\varepsilon_h}(\Gamma_{2_k})) + \frac{1}{2}\|\text{prox}_h^{\varepsilon_h}(\Gamma_{2_k}) -\Gamma_{2_k}\|_2^2 \notag\\
    &-h(\text{prox}_h(\Gamma_{2_k}))- \frac{1}{2}\|\text{prox}_h(\Gamma_{2_k}) -
      \Gamma_{2_k}\|_2^2
      \bigg|.
\end{align}
%The $\varepsilon_g$ and $\varepsilon_h$ scalar error sequences represent respectively the solver tolerances ($\text{SLVTOL}_g$ and $\text{SLVTOL}_h$) of ADMM's subproblems \eqref{subproblem_g} and \eqref{subproblem_h}, which are depicted in Figure \ref{Figure 3} for different loop perforation $skip$ sizes.

%Objective evaluation
We measure the distance from the optimal set using the suboptimality metric $f(x^k,z^k)-f(x^\star, z^\star)$ which we simply refer to it as $f^k-f^\star$ in the plots. The reference $f^\star = f(x^\star, z^\star)$ is computed with high accuracy using highly precise solution for the $x$-subproblem ($\varepsilon_g \leq 2.2204\times10^{-16}$) and a skip size of $skip=1$ for the $z$-subproblem.

%Probability calculation
In order to validate our probabilistic statement, we have calculated the empirical probabilities for each experiment. The empirical probability is given by
\begin{equation}
    p = \frac{\#[(f^k-f^\star)<B(\gamma)]}{N}
\end{equation}
where $B(\gamma)$ is the $\gamma$-parameterized probabilistic bound of Theorem~\ref{Theorem3}, $\#$ is the counter operator and $N$ is the experimental maximum number of iterations $k \leq N$.

%Simulation period
The simulation period is set to $3000$ iterations in both experiments.
\subsection{LASSO}
In this experiment, we use the approximate WLM-ADMM scheme of \eqref{WLM-ADMMProx} to solve a synthesized LASSO problem with simulated truncated Gaussian error sequences
\begin{equation}
  \label{Eq:LassoProblem}
  \begin{split}
      &\underset{x \in \mathbb{R}^n,z \in \mathbb{R}^n}{\text{minimize}} \,\,\,
      f(x) := \|Ax-b\|_2^2 + \|z\|_1,\\
      &\text{subject to} \,\,\,
      x-z = 0\,,  
  \end{split}
\end{equation}
% where $A\in\mathbb{R}^{m\times n}$ and $b\in\mathbb{R}^m$ are randomly generated data. 
% In order to validate the accuracy of the derived error bounds, we run the ADMM scheme \eqref{WLM-ADMMProx} with $L=I$, $\rho=1$ and $\lambda_x=\lambda_z=1$ for simplicity. The ADMM is also intialised randomly. 
The calculated error bounds are depicted below with their corresponding empirical probabilities. Figure \ref{Figure 1} shows the automatic adaptation of probabilistic bounds to different noise levels ($\delta=20\%$ and $\delta=20\%$).
\begin{figure}%[H]
%\centering
%\captionsetup{justification=centering}
\includegraphics[width=10cm]{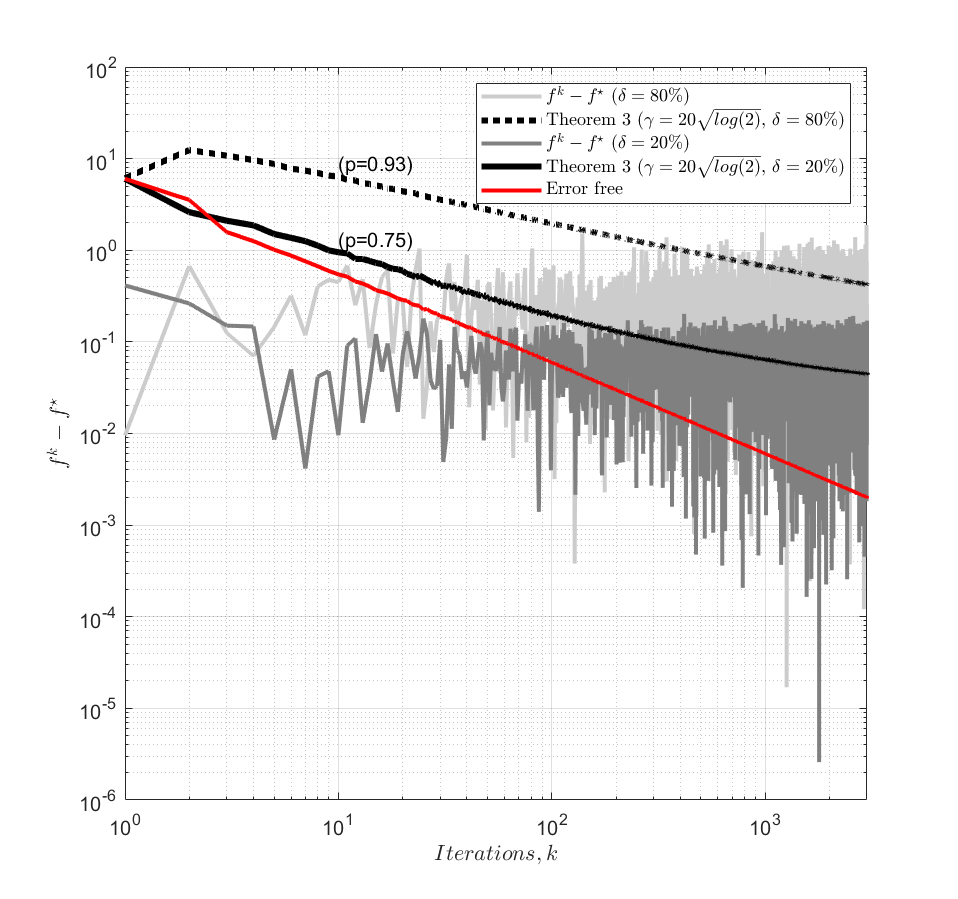}
\caption{Probabilistic upper bounds based on Theorem 3 vs error-free bound for problem \eqref{Eq:LassoProblem} (with $\lambda=1$), with different injected noise levels ($\delta = 20\%$ and $\delta = 80\%$) and a fixed probability parameter of $\gamma = 20\sqrt{\log(2)}$. The ADMM solver tolerances are fixed to $\text{RELTOL}=\text{ABSTOL}=10^{-6}$.\label{Figure 1}}
\end{figure}
In Figure \ref{Figure 2}, at a fixed noise level of $\delta=20\%$, the probabilistic bounds with $\gamma = 2\sqrt{\log(2)}$ and $\gamma = 20\sqrt{\log(2)}$ are superimposed on the error-free bound for comparison. 
\begin{figure}%[H]
%\centering
%\captionsetup{justification=centering}
\includegraphics[width=10cm]{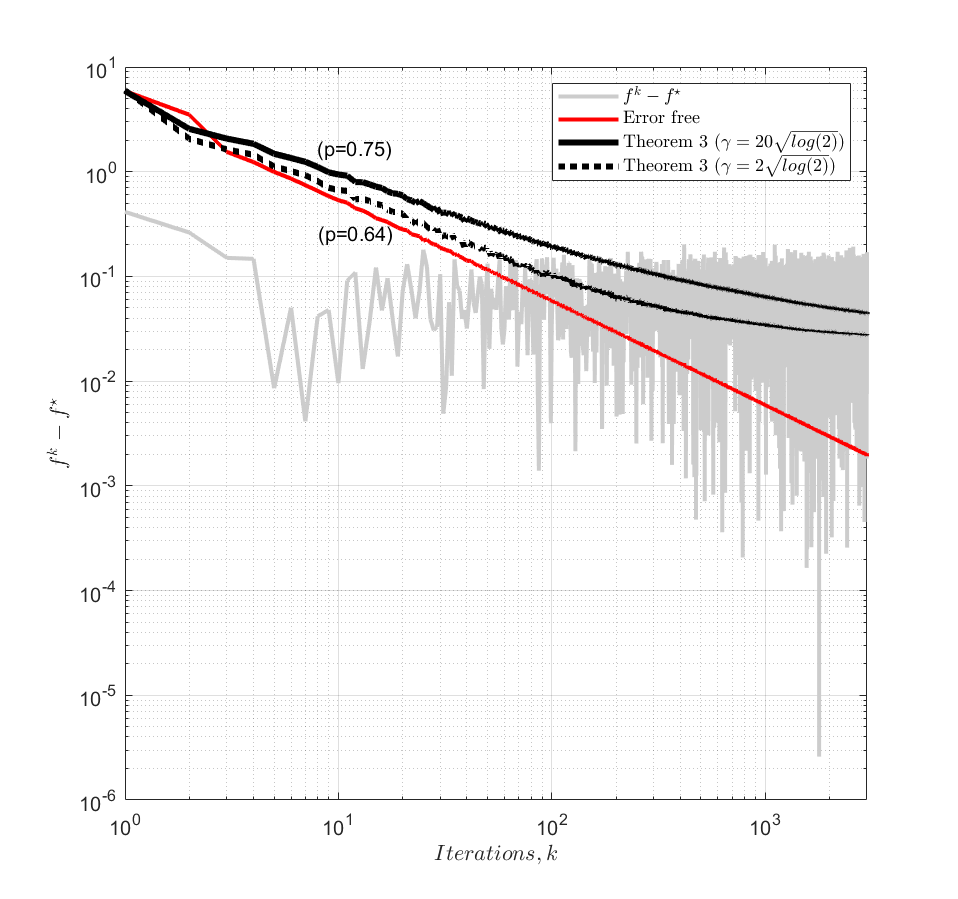}
\caption{Probabilistic upper bounds based on Theorem 3 vs error-free bound for problem \eqref{Eq:LassoProblem} (with $\lambda=1$), with an injected noise level of $\delta = 20\%$ and different values for the probability parameter ($\gamma = 2\sqrt{\log(2)}$ and $\gamma = 20\sqrt{\log(2)}$). The ADMM solver tolerances are fixed to $\text{RELTOL}=10^{-4}=\text{ABSTOL}=10^{-4}$.
\label{Figure 2}}
\end{figure}
In  both figures, the probabilistic error bounds tend to predict the rate  and suboptimal residual (in the objective function values) with high accuracy. The proposed bounds are also highly adaptive to different noise levels (Figure \ref{Figure 1}). Although we fixed the parameter $\gamma$ in Figure \ref{Figure 2}, this can be automatically tuned for better accuracy. Both figures suggest that there is a trade-off between rate and residual estimation, and a small $\gamma$ always gives better rate estimates at the expense of degraded residual prediction in the presence of parasitic computational noise. 

The following experiment considers a more practical situation, where the computational noise comes from a combination of software tolerance and algorithmic approximation.

\newpage 

\subsection{Robust Regression with $k$-Support Norm Regularization}
In this experiment, we use the WL-ADMM scheme of \eqref{WLM-ADMMProx} to solve a synthesized robust regression problem with $k$-support norm regularization
\label{SectionExperiment}
\begin{equation}
  \label{problem_l1_ksupp}
  \begin{split}
      &\underset{x \in \mathbb{R}^n,z \in \mathbb{R}^m}{\text{minimize}} \,\,\,
      f(x) := \frac{1}{2}\|Ax-b\|_1 + \frac{\lambda}{2}(\|z\|_{k}^{k_{\text{supp}}})^2,\\
      &\text{subject to} \,\,\,
      x-z = 0\,,  
  \end{split}
\end{equation}
% where $A\in\mathbb{R}^{m\times n}$ and $b\in\mathbb{R}^m$ are randomly generated data with $m=500$ and $n=100$. Again, we run the ADMM scheme \eqref{WLM-ADMMProx} with $L=I$, $\rho=1.2$ and $\lambda_x=\lambda_z=1$, and we randomly initialize the ADMM for each experiment. 
% Two types of approximation errors are introduced. The first approximation error corresponds to the early termination of step \eqref{subproblem_g} which is introduced via CVX's solver parameter. The second approximation error corresponds to loop perforation in step \eqref{subproblem_h} which is introduced via a $skip$ parameter
with $\lambda=1$ and $k_{\text{supp}}=20$.
Figure \ref{Figure 4} below shows the performance of the WL-ADMM algorithm and the accuracy of the proposed bounds in the absence of computational noise. The error-free bound and the proposed probabilistic bound coincide. As predicted by both convergence bounds, the WL-ADMM algorithm converges to the optimal solution with high accuracy at a linear iteration rate complexity $O(1/k)$.

\begin{figure}%[H]
%\centering
%\captionsetup{justification=centering}
\includegraphics[width=10cm]{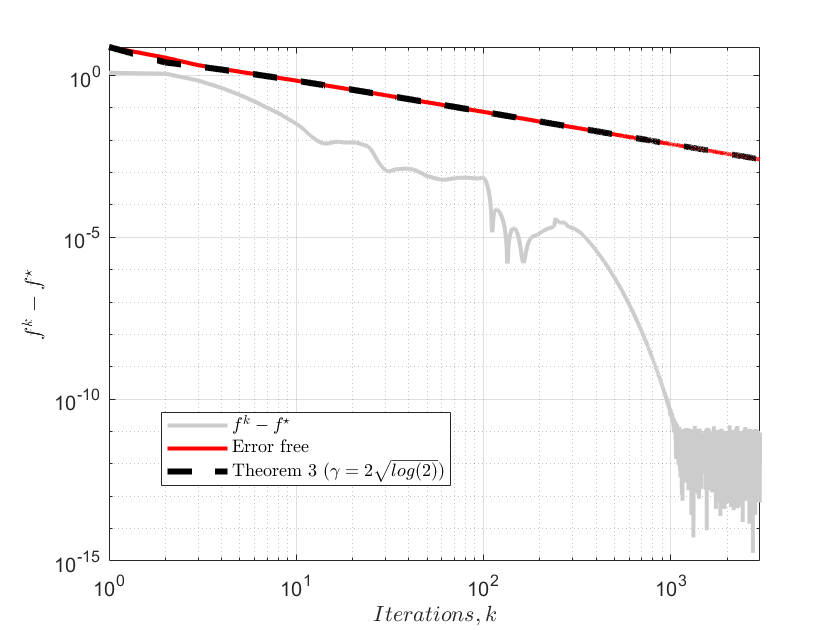}
\caption{Probabilistic upper bounds based on Theorem 3 vs error-free bound for problem \eqref{problem_l1_ksupp} (with $k_{\text{supp}}=20$ and $\lambda=1$). This is obtained using the exact solver without any loop perforation, i.e., $skip=1$; and the ADMM solver tolerances are fixed to $\text{RELTOL}=\text{ABSTOL}=2.2204\times10^{-16}$.}
\label{Figure 4}
\end{figure}

% \begin{figure}[H]
% %\centering
% %\captionsetup{justification=centering}
% \includegraphics[width=10cm]{}
% \caption{Probabilistic upper bounds based on Theorem 3 vs error-free bound for problem \eqref{problem_l1_ksupp} (with $k_{\text{supp}}=20$ and $\lambda=1$). The loop perforation size is $skip=30$; and the ADMM solver tolerances are fixed to $\text{RELTOL}=\text{ABSTOL}=2.2204\times10^{-16}$.}
% \label{Figure 4}
% \end{figure}

For the approximate WL-ADMM, we use different loop perforation $skip$ sizes in \eqref{subproblem_h}. The resulting $\varepsilon_g$ and $\varepsilon_h$ proximal error sequences of ADMM's subproblems \eqref{subproblem_g} and \eqref{subproblem_h} are plotted below 

\begin{figure}[H]
%\centering
%\captionsetup{justification=centering}
\includegraphics[width=10cm]{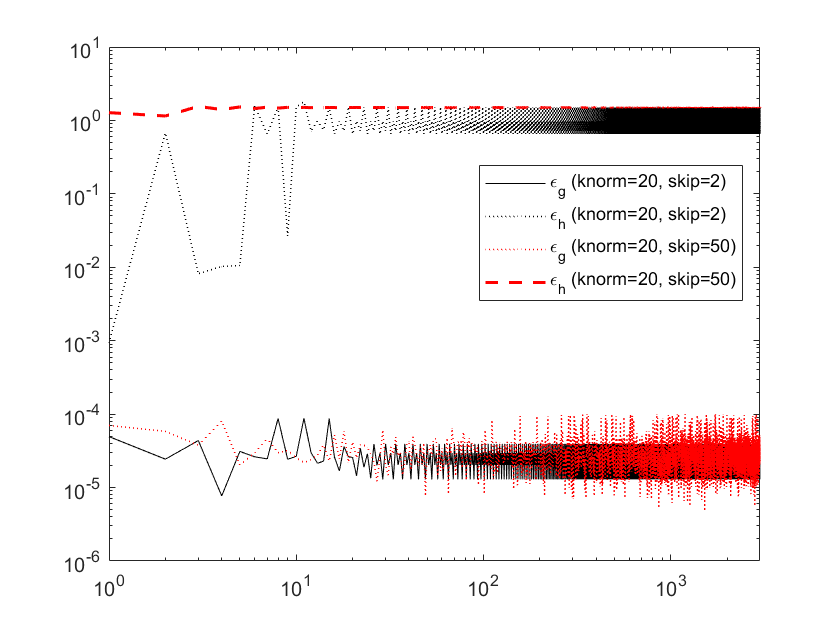}
\caption{ADMM subproblem's inner loop solver tolerances ($\varepsilon_g$ and $\varepsilon_h$) for $k_{\text{supp}}=20$ and different loop perforation sizes ($skip=2$ and $skip=50$). The outer loop ADMM solver tolerances are fixed to $\text{RELTOL}=\text{ABSTOL}=2.2204\times10^{-16}$.}
\label{Figure 3}
\end{figure}

We run another experiment by increasing the loop perforation skip size to $skip=2$ in both inner loops of Algorithm 1 in \cite{argyriou2012sparse}. The probabilistic bound with $\gamma=2\sqrt{\log(2)}$ is superimposed on the error-free bound for comparison in Figure~\ref{Figure 5}. We can see that the probabilistic bound, in solid black, tracks the residual error with high probability ($p=1.00$) while the error-free bound, in red, shows no correlation with computational noise. The empirically calculated probability conforms with Theorem~\ref{Theorem3}, which states that the bound holds with $p \geq 1-4\exp({-\gamma^2/2})$.  %The naive error-free convergence results yield significant accuracy degradation in convergence bound estimation.

\begin{figure}%[H]
%\centering
%\captionsetup{justification=centering}
\includegraphics[width=10cm]{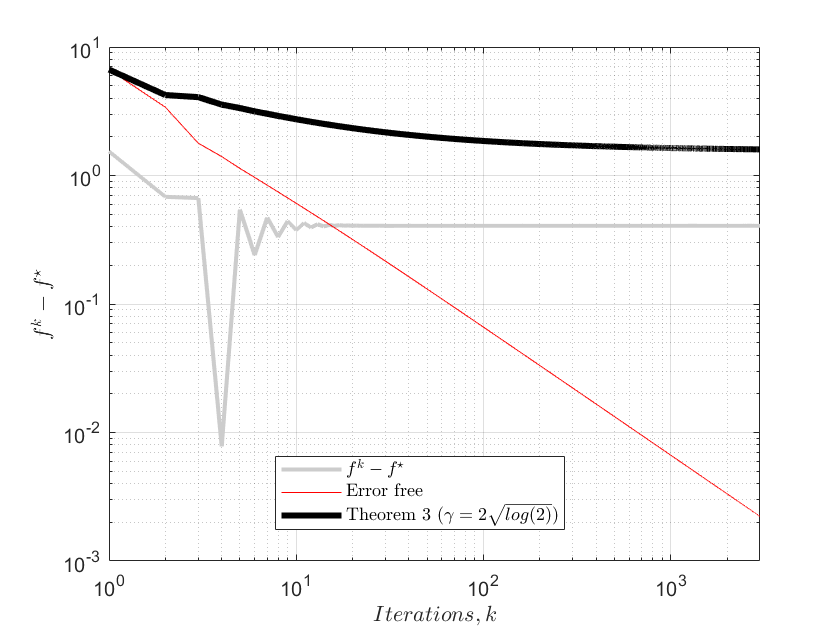}
\caption{Probabilistic upper bounds based on Theorem 3 vs error-free bound for problem \eqref{problem_l1_ksupp} (with $k_{\text{supp}}=20$ and $\lambda=1$). The loop perforation size is $skip=2$; and the ADMM solver tolerances are fixed to $\text{RELTOL}=\text{ABSTOL}=2.2204\times10^{-16}$.}
\label{Figure 5}
\end{figure}

% Figure \ref{Figure 5} presents the distance to the optimum in terms of objective function evaluation $f(x^k,z^k)-f(x^\star,z^\star)$ for each approximate ADMM iteration, where $(x^\star,z^\star)$ are obtained using an exact ADMM solver.

\begin{figure}%[H]
%\centering
%\captionsetup{justification=centering}
\includegraphics[width=10cm]{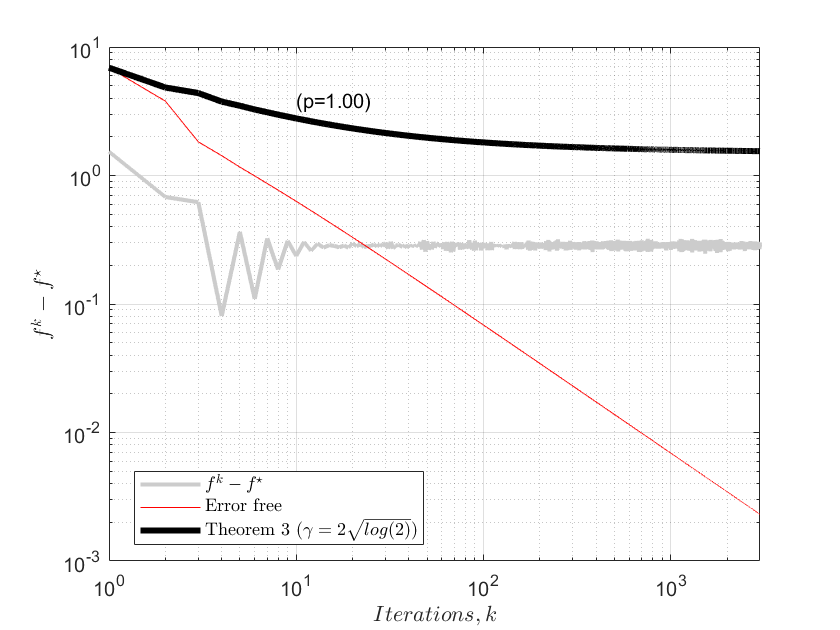}
\caption{Probabilistic upper bounds based on Theorem 3 vs error-free bound for problem \eqref{problem_l1_ksupp} (with $k_{\text{supp}}=20$ and $\lambda=1$). The loop perforation size is $skip=50$; and the ADMM solver tolerances are fixed to $\text{RELTOL}=\text{ABSTOL}=2.2204\times10^{-16}$.}
\label{Figure 6}
\end{figure}

In order to emphasize the accuracy of the proposed probabilistic bounds in suboptimal convergence prediction, we run another experiment but with increasing the loop perforation skip size to $skip = 50$ which corresponds to increased proximal computational errors. Figure~\ref{Figure 3} depicts the increased noise level in both $\varepsilon_g$ and $\varepsilon_h$ proximal errors. The function value iterations and the corresponding error-free and probabilistic convergence bounds are plotted in Figure \ref{Figure 6}. As the number of iterations increases, the probabilistic bound becomes tighter and more accurate in estimating the noise ball around the optimum as well as the rate of convergence. The calculated probability also agrees with the lower bound of Theorem~\ref{Theorem3}.
%From Figure \ref{Figure 3}, we can see that increasing the loop perforation skip size affects  $\varepsilon_h$ by increasing its range while slightly reducing its maximum value. However, the range of the objective function values is still increasing as can be seen from Figure \ref{Figure 6}

%Discuss probability problem
Although the theoretical lower bound on the probability is $1-4\exp(-\gamma^2/2)$, we have found that $1-2\exp(-\gamma^2/2)$ is empirically tighter in all practical experiments.

% \begin{figure}[H]
% %\centering
% %\captionsetup{justification=centering}
% \includegraphics[width=10cm]{}
% \caption{Probabilistic upper bounds based on Theorem 3 vs error-free bound for problem \eqref{problem_l1_ksupp} (with $k_{\text{supp}}=20$ and $\lambda=1$). The loop perforation size is $skip=50$; and the ADMM solver tolerances are fixed to $\text{RELTOL}=\text{ABSTOL}=2.2204\times10^{-16}$.}
% \label{Figure 5}
% \end{figure}

\section{CONCLUSION}
\label{SectionConclusion}
In this work, we analysed the convergence of a fully inexact and generalized ADMM version which we referred to as WL-ADMM. We derived general upper bounds on the algorithm's iterations truncation error assuming additive and deterministic approximation error models. The proposed convergence results apply to both the nonconvex case as well as to convex case under the Lipschitz continuity condition. We also found that for the deterministic case, the WL-ADMM suboptimal convergence is only guaranteed under summability assumption on the approximation error sequences of the ADMM's subproblems iterations. 

We found that the summability assumption can be relaxed by using the same additive but also bounded and random error sequences. We have established probabilistic convergence results for the lower-$\mathcal{C}^2$ nonconvex case as well as the Lipschitz continuous convex case under error boundedness and conditional mean independence conditions.

The proposed probabilistic bounds were validated and tested on practical software's (CVX solver) early termination errors combined with loop perforation error of different sizes. We also tested the proposed bounds on injected simulated noise. Based on the presented experimental results from synthetic LASSO and k-support norm regularized robust regression problems, the probabilistic bounds predicted the residual error and the convergence rate (to the suboptimal solution) with high accuracy without any explicit or implicit assumptions on the approximation errors statistics nor its dynamics (vanishing rate).
\appendix
% \section*{Proofs of Main Results}
% \renewcommand{\thesubsection}{\Alph{subsection}}
\section*{Some known results}
\begin{lemma}
\label{lemma:Prox2SubGrad}
Let $x \in \mathcal{H}$. Then $x^{+} = \text{prox}_{q}(x)$ if, and only if, $(x-x^{+})\in \partial q(x^{+})$.
\end{lemma}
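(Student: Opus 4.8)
The plan is to unwind both implications directly from the definition of the proximal operator (Definition~\ref{Def:Prox}) and the subgradient inequality (Definition~\ref{Def:Subgradient}), keeping the argument elementary and self-contained rather than invoking a sum rule for subdifferentials.

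For the ``only if'' direction, I would start from the fact that $x^{+} = \text{prox}_{q}(x)$ means $x^{+}$ minimises $z \mapsto q(z) + \tfrac12\|z-x\|^2$, so $q(x^{+}) + \tfrac12\|x^{+}-x\|^2 \le q(z) + \tfrac12\|z-x\|^2$ for every $z$. Expanding $\|z-x\|^2$ around $x^{+}$ via $z-x = (z-x^{+}) + (x^{+}-x)$ turns this into $q(z) \ge q(x^{+}) + \langle x-x^{+}, z-x^{+}\rangle - \tfrac12\|z-x^{+}\|^2$, which is almost the subgradient inequality for $x-x^{+}$ at $x^{+}$ apart from the spurious quadratic remainder. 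To remove it I would apply this bound with $x^{+} + t(z-x^{+})$ in place of $z$, use convexity of $q$ to get $q(x^{+} + t(z-x^{+})) \le (1-t)q(x^{+}) + t\,q(z)$, cancel a common factor $t$, and let $t \downarrow 0$; the remainder term now carries a factor $t$ and vanishes, leaving exactly $q(z) \ge q(x^{+}) + \langle x-x^{+}, z-x^{+}\rangle$, i.e.\ $x-x^{+} \in \partial q(x^{+})$.

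For the ``if'' direction, assuming $x-x^{+} \in \partial q(x^{+})$, I would add $\tfrac12\|z-x\|^2$ to both sides of the subgradient inequality and complete the square: setting $u = z-x^{+}$ and $w = x-x^{+}$ gives $\langle w, u\rangle + \tfrac12\|u-w\|^2 = \tfrac12\|u\|^2 + \tfrac12\|w\|^2 \ge \tfrac12\|w\|^2$, hence $q(z) + \tfrac12\|z-x\|^2 \ge q(x^{+}) + \tfrac12\|x^{+}-x\|^2$ for all $z$. Strong convexity of the proximal objective makes its minimiser unique, so this forces $x^{+} = \text{prox}_{q}(x)$.

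The computations are all routine; the only place needing care is the ``only if'' direction, where one must resist concluding directly from the first rearranged inequality and instead run the convexity-plus-limiting argument to discard the $-\tfrac12\|z-x^{+}\|^2$ term. (Equivalently one could cite Fermat's rule together with additivity of the subdifferential of a convex function and a smooth one, but I prefer the self-contained route.)
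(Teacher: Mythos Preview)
Your proposal is correct: the ``only if'' direction via the convex-combination-and-limit trick cleanly removes the quadratic remainder, and the ``if'' direction is just completing the square together with uniqueness of the proximal minimiser. There is nothing to compare against, however, because the paper states this lemma in its appendix under ``Some known results'' without supplying any proof; it is quoted as a standard fact and used as a black box in the proof of Theorem~1. Your self-contained argument is therefore strictly more than the paper provides.
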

\begin{lemma}\label{Lem:BackwardProxError} 
Given a convex function $f$. For $\lambda > 0$, define $G$: $\mathbb{R}^n \times \mathbb{R}^n \rightarrow (-\infty, \infty]$ as the proper, closed, and $1/\lambda$-strongly convex function
\begin{equation}
    G\big(y,\, x\big):=  f(y) +\frac{1}{2\lambda}\norm{x-y}_{2}^2, \notag
\end{equation}
Define $\widehat{y}^\star := \arg \min G\big(y,\, x\big)$ as the minimizer of $G$ with respect to $y$ when $x$ is fixed, and $y^\star \in \{y : G(y,x)-G(\widehat{y}^\star,x) \leq \varepsilon\}$ as an $\varepsilon$-approximate solution of the same problem. Then, 
\begin{equation}
     \big\|\widehat{y}^\star - y^\star\big\|_2 \leq \sqrt{2\lambda\varepsilon}.\notag  
\end{equation}
\end{lemma}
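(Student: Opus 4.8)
The plan is to exploit the strong convexity of $G(\cdot,x)$ directly. For each fixed $x$, the quadratic term $\frac{1}{2\lambda}\|x-y\|_2^2$ is $1/\lambda$-strongly convex in $y$ (its Hessian is $\frac1\lambda I$), and adding the convex function $f$ preserves this, so $G(\cdot,x)$ is $1/\lambda$-strongly convex, as already asserted in the statement. Coercivity of the quadratic term together with lower semicontinuity guarantees that the minimizer $\widehat{y}^\star$ exists, and strong convexity makes it unique; this is what makes the defining set of an $\varepsilon$-approximate solution well posed.

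First I would record the first-order optimality condition $0\in\partial_y G(\widehat{y}^\star,x)$. Then I would invoke the standard subgradient characterisation of $\mu$-strong convexity: for any $y,y'$ and any $g\in\partial_y G(y',x)$, one has $G(y,x)\ge G(y',x)+\langle g,\,y-y'\rangle+\frac{1}{2\lambda}\|y-y'\|_2^2$. Specialising to $y'=\widehat{y}^\star$, $g=0$, and $y=y^\star$ yields the quadratic growth bound $G(y^\star,x)-G(\widehat{y}^\star,x)\ge\frac{1}{2\lambda}\big\|y^\star-\widehat{y}^\star\big\|_2^2$. Combining this with the defining inequality $G(y^\star,x)-G(\widehat{y}^\star,x)\le\varepsilon$ of an $\varepsilon$-approximate solution gives $\frac{1}{2\lambda}\|y^\star-\widehat{y}^\star\|_2^2\le\varepsilon$, and taking square roots delivers $\|y^\star-\widehat{y}^\star\|_2\le\sqrt{2\lambda\varepsilon}$.

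This argument is essentially a one-line consequence of strong convexity, so I do not anticipate a genuine obstacle; the only point meriting care is the justification of the quadratic growth inequality at the minimizer. One can either cite it as the textbook second-order growth property of strongly convex functions, or derive it in two lines from the definition ``$G(\cdot,x)-\frac{1}{2\lambda}\|\cdot\|_2^2$ is convex'' together with the fact that $\widehat{y}^\star$ is also a minimizer of that convex shift, hence $0$ is a subgradient of the shift at $\widehat{y}^\star$. No hypotheses beyond convexity and closedness of $f$ are used.
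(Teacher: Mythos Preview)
Your argument is correct: the quadratic growth inequality $G(y^\star,x)-G(\widehat{y}^\star,x)\ge\frac{1}{2\lambda}\|y^\star-\widehat{y}^\star\|_2^2$ follows immediately from $1/\lambda$-strong convexity and the optimality condition $0\in\partial_y G(\widehat{y}^\star,x)$, and combining it with the $\varepsilon$-suboptimality assumption gives the bound. The paper itself does not supply a proof but simply cites \cite[p.~43]{hamadouche2022sharper}, so you have effectively reconstructed the standard argument behind that citation.

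One small caveat on your parenthetical alternative derivation: it is not true that $\widehat{y}^\star$ minimizes the shifted function $G(\cdot,x)-\frac{1}{2\lambda}\|\cdot\|_2^2$; rather, $0\in\partial_y G(\widehat{y}^\star,x)$ translates into $-\frac{1}{\lambda}\widehat{y}^\star$ being a subgradient of the shift at $\widehat{y}^\star$, and expanding the convexity inequality with that subgradient gives the quadratic growth. Your primary line via the subgradient characterisation of strong convexity is clean and needs no amendment.
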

\begin{proof}
See \cite[p.\ 43]{hamadouche2022sharper}.
\end{proof}
\begin{lemma}[\textbf{Hoeffding bound}] \label{lem:4} %external
Suppose that the random variables $X_i$, $i= 1,\dots,n$ are
independent, and $X_i$ has mean $\mu_i$ and sub-Gaussian parameter $\sigma_i$. If we define $S = \sum_{i=1}^{k}X_i$ then for all $t \geq 0$, we have
\begin{equation}
    \text{Pr}\bigg(|S - \mathbb{E}\big[S\big]|> t\bigg)\leq 2\textup{exp}\bigg(\frac{-t^2}{2\sum_{i=1}^{k}\sigma_i^2}\bigg).    
\end{equation}
In particular, if $X_i \in [a, b]$ for all $i = 1, 2,\dots,n$, then
\begin{equation} 
    \text{Pr}\bigg(|S - \mathbb{E}\big[S\big]|\geq t\bigg)\leq 2\textup{exp}\bigg(\frac{-2t^2}{k(b-a)^2}\bigg).    
\end{equation}
\end{lemma}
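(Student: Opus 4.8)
The plan is to prove this by the classical Chernoff (exponential-moment) method, and then supply the one bounded-variable fact needed for the second claim. First I would recall what ``$X_i$ sub-Gaussian with parameter $\sigma_i$'' means: $\mathbb{E}\big[e^{s(X_i-\mu_i)}\big]\le e^{s^2\sigma_i^2/2}$ for every $s\in\mathbb{R}$. Fix $t\ge 0$ and $s>0$. Applying Markov's inequality to the nonnegative random variable $e^{s(S-\mathbb{E}[S])}$ gives
\[
  \mathrm{Pr}\big(S-\mathbb{E}[S]\ge t\big)\le e^{-st}\,\mathbb{E}\big[e^{s(S-\mathbb{E}[S])}\big].
\]
Since $S-\mathbb{E}[S]=\sum_{i=1}^{k}(X_i-\mu_i)$ and the $X_i$ are independent, the moment generating function factorizes, and bounding each factor by the sub-Gaussian estimate yields $\mathbb{E}\big[e^{s(S-\mathbb{E}[S])}\big]\le \exp\big(\tfrac{s^2}{2}\sum_{i=1}^{k}\sigma_i^2\big)$, hence $\mathrm{Pr}(S-\mathbb{E}[S]\ge t)\le \exp\big(-st+\tfrac{s^2}{2}\sum_i\sigma_i^2\big)$.

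Next I would optimize over $s>0$: the exponent is a convex quadratic in $s$, minimized at $s^\star=t/\sum_i\sigma_i^2$, which gives $\mathrm{Pr}(S-\mathbb{E}[S]\ge t)\le\exp\!\big(-t^2/(2\sum_i\sigma_i^2)\big)$. The same argument applied to the variables $-X_i$ (also sub-Gaussian with the same parameters) controls the lower tail $\mathrm{Pr}(S-\mathbb{E}[S]\le -t)$ by the same quantity, and a union bound over the two one-sided events produces the factor $2$, establishing the first displayed inequality.

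For the bounded case the only extra ingredient is Hoeffding's lemma: if $X\in[a,b]$ almost surely then $X$ is sub-Gaussian with parameter $(b-a)/2$, i.e. $\mathbb{E}\big[e^{s(X-\mathbb{E}X)}\big]\le e^{s^2(b-a)^2/8}$. I would prove this by writing $\psi(s):=\log\mathbb{E}\big[e^{s(X-\mathbb{E}X)}\big]$, noting $\psi(0)=\psi'(0)=0$, and showing $\psi''(s)\le (b-a)^2/4$ uniformly in $s$: the quantity $\psi''(s)$ is the variance of $X$ under the exponentially tilted measure $d\mathbb{P}_s\propto e^{sX}\,d\mathbb{P}$, which is supported in $[a,b]$, so its variance is at most $\big((b-a)/2\big)^2$; a second-order Taylor expansion then gives $\psi(s)\le s^2(b-a)^2/8$. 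Substituting $\sigma_i=(b-a)/2$ for all $i$ into the first part, so that $\sum_{i=1}^{k}\sigma_i^2=k(b-a)^2/4$, turns the exponent $-t^2/(2\sum_i\sigma_i^2)$ into $-2t^2/(k(b-a)^2)$, which is the second inequality.

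The only genuinely delicate step is Hoeffding's lemma --- specifically the uniform bound $\psi''(s)\le (b-a)^2/4$ on the tilted variance; everything else (Markov/Chernoff, factorization by independence, the one-dimensional optimization, and the union bound for the two-sided statement) is routine. If one prefers to avoid the tilted-measure computation, an equivalent route is to use the convexity estimate $e^{sx}\le \tfrac{b-x}{b-a}e^{sa}+\tfrac{x-a}{b-a}e^{sb}$ on $[a,b]$, take expectations, and optimize the resulting explicit function of $s$; both approaches are of comparable length, and one may instead simply cite a standard reference for this lemma.
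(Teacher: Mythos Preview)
Your proposal is correct and follows the standard Chernoff--Hoeffding argument. The paper itself does not give a proof at all: it simply writes ``See \cite[p.\ 24]{wainwright2019high}'' and defers entirely to that reference. Your write-up is essentially what one finds there (Markov/Chernoff bound, factorization via independence, the sub-Gaussian MGF bound, optimization in $s$, union bound for the two-sided tail, and Hoeffding's lemma for the bounded case), so there is no methodological difference to discuss --- you have supplied the textbook proof that the paper chose to cite rather than reproduce.
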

\begin{proof}
See \cite[p.\ 24]{wainwright2019high}.
\end{proof}
\begin{lemma}
\label{lem:sumprob}
Let $(\Omega, \mathcal{F}, Pr)$ be a probability space and $T_i, i=1,\dots, m$, events in $\mathcal{F}$. Let $t_i$ be some function of a scalar variable $\gamma$. If we have
\begin{equation} 
    \text{Pr}\bigg(T_i\geq t_i(\gamma)\bigg)\leq P_i(\gamma),    
\end{equation}
for all $i=1,\dots, m$, then the following holds
\begin{equation} 
    \text{Pr}\bigg(\cup_{i=1}^m T_i\geq t_i(\gamma)\bigg)\leq \sum_{i=1}^m P_i(\gamma).    
\end{equation}
Equivalently we have
\begin{equation} 
    \text{Pr}\bigg(\cup_{i=1}^m T_i\leq t_i(\gamma)\bigg)\geq 1-\sum_{i=1}^m P_i(\gamma).    
\end{equation}
\end{lemma}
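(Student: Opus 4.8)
The plan is to prove this directly from the countable (here finite) subadditivity of the probability measure $Pr$, which is part of the axioms of a probability space. First I would introduce shorthand for the events whose probabilities we control: for each $i = 1,\dots,m$, define the event $E_i := \{\omega \in \Omega : T_i(\omega) \geq t_i(\gamma)\}$, so that by hypothesis $Pr(E_i) \leq P_i(\gamma)$. (Strictly speaking $T_i$ is already an event in the statement; I would read ``$T_i \geq t_i(\gamma)$'' as the measurable set on which the scalar random quantity associated with $T_i$ exceeds the threshold $t_i(\gamma)$, which is the only reading that makes the statement non-vacuous.) Then the event ``$\cup_{i=1}^m T_i \geq t_i(\gamma)$'' is exactly $\bigcup_{i=1}^m E_i$.

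Next I would invoke finite subadditivity of $Pr$: for any events $E_1,\dots,E_m \in \mathcal{F}$,
\begin{equation}
    Pr\Bigl(\bigcup_{i=1}^m E_i\Bigr) \leq \sum_{i=1}^m Pr(E_i).
\end{equation}
This is standard; it follows by induction on $m$ from the inclusion–exclusion identity $Pr(E_1 \cup E_2) = Pr(E_1) + Pr(E_2) - Pr(E_1 \cap E_2) \leq Pr(E_1) + Pr(E_2)$, using nonnegativity of $Pr$. Combining with the hypothesis $Pr(E_i) \leq P_i(\gamma)$ gives
\begin{equation}
    Pr\Bigl(\bigcup_{i=1}^m E_i\Bigr) \leq \sum_{i=1}^m Pr(E_i) \leq \sum_{i=1}^m P_i(\gamma),
\end{equation}
which is the first displayed conclusion.

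For the equivalent complementary statement, I would pass to complements: the event ``$\cap_{i=1}^m T_i \leq t_i(\gamma)$'' (I read the intended claim as the intersection $\bigcap_{i=1}^m \{T_i \leq t_i(\gamma)\} = \bigcap_{i=1}^m E_i^c$, i.e. the event that \emph{all} thresholds are simultaneously respected) is the complement of $\bigcup_{i=1}^m E_i$. Hence by the first part,
\begin{equation}
    Pr\Bigl(\bigcap_{i=1}^m E_i^c\Bigr) = 1 - Pr\Bigl(\bigcup_{i=1}^m E_i\Bigr) \geq 1 - \sum_{i=1}^m P_i(\gamma),
\end{equation}
which completes the proof. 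There is no real obstacle here: the only subtlety is notational/interpretive — making sure the loose notation ``$T_i \geq t_i(\gamma)$'' and ``$\cup T_i \geq t_i(\gamma)$'' in the statement is parsed as the union of the individual exceedance events, and that the complementary line is the intersection of the individual non-exceedance events rather than a literal complement of a single set. Once that reading is fixed, the result is just additivity/monotonicity of a probability measure, and the application in the paper (to the two Hoeffding-type events for $\epsilon_{g_\Omega}$ and $\epsilon_{h_\Omega}$, each contributing $2\exp(-\gamma^2/2)$, yielding the $4\exp(-\gamma^2/2)$ in Theorem~\ref{Theorem3}) follows immediately with $m=2$.
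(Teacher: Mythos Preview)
Your argument is correct: this is precisely Boole's inequality (finite subadditivity of $Pr$) followed by passage to complements, and your reading of the loose notation---interpreting ``$\cup_{i=1}^m T_i \geq t_i(\gamma)$'' as $\bigcup_{i=1}^m \{T_i \geq t_i(\gamma)\}$ and the complementary line as the intersection of the non-exceedance events---is the only sensible one and matches how the lemma is actually invoked in the proof of Theorem~\ref{Theorem3} and Corollary~\ref{Corol3}.

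There is nothing to compare against: the paper lists Lemma~\ref{lem:sumprob} in the appendix under ``Some known results'' and states it without proof. Your write-up therefore supplies an argument the paper omits, and it is the standard one.
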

\begin{lemma}[Jensen's inequality]
\label{lem:jensen}
For a convex function f, Jensen's inequality implies
\begin{equation*}
    f\bigg(\frac{1}{k+1}\sum_{i=0}^{k}x^{i+1}\bigg) \leq
    \frac{1}{k+1} \sum_{i=0}^{k}f(x^{i+1})\,,
\end{equation*}
\end{lemma}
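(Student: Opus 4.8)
The plan is to reduce the claimed inequality to the bare two‑point definition of a convex function, and I would actually write out two short arguments, presenting the inductive one as the main proof because it needs no regularity hypothesis beyond convexity.

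\emph{Subgradient route.} Put $\bar{x} := \frac{1}{k+1}\sum_{i=0}^{k} x^{i+1}$. If some $f(x^{i+1}) = +\infty$ the inequality is trivial, so assume all values are finite; then $f$ is convex and finite near $\bar{x}$, so I would choose $w \in \partial f(\bar{x})$. By Definition~\ref{Def:Subgradient}, $f(x^{i+1}) \ge f(\bar{x}) + \langle w,\, x^{i+1} - \bar{x}\rangle$ for every $i = 0,\dots,k$. Averaging these $k+1$ inequalities and noting that $\frac{1}{k+1}\sum_{i=0}^{k}(x^{i+1}-\bar{x}) = 0$ makes the inner‑product term vanish, leaving $\frac{1}{k+1}\sum_{i=0}^{k} f(x^{i+1}) \ge f(\bar{x})$, which is the statement.

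\emph{Inductive route (the one I would record as the proof).} I would induct on the number $m = k+1$ of averaged points. For $m = 1$ the inequality is an equality. Assuming it for $m$ points, I would write
\begin{equation*}
  \frac{1}{m+1}\sum_{i=1}^{m+1} x^{i}
  \;=\; \frac{m}{m+1}\Bigl(\frac{1}{m}\sum_{i=1}^{m} x^{i}\Bigr) \;+\; \frac{1}{m+1}\,x^{m+1},
\end{equation*}
apply the defining convexity inequality with weight $\beta = \frac{m}{m+1} \in (0,1)$, and then upper‑bound $f\bigl(\frac{1}{m}\sum_{i=1}^{m} x^{i}\bigr)$ using the inductive hypothesis. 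Collecting the coefficients $\frac{m}{m+1}\cdot\frac{1}{m} = \frac{1}{m+1}$ gives $f\bigl(\frac{1}{m+1}\sum_{i=1}^{m+1} x^{i}\bigr) \le \frac{1}{m+1}\sum_{i=1}^{m+1} f(x^{i})$, closing the induction.

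There is no genuinely hard step. The only delicate point is in the subgradient route, where $\partial f(\bar{x}) \ne \emptyset$ requires $f$ to be proper with $\bar{x}$ in (the relative interior of) its effective domain; this holds in the intended application since the $x^{i+1}$ are WLM‑ADMM iterates with finite objective values, but to keep the lemma free of hidden assumptions I would present the inductive argument, which uses nothing beyond the stated convexity of $f$. The same reasoning applies verbatim with $z^{i+1}$ in place of $x^{i+1}$, which is how the lemma is invoked in the proof of Corollary~\ref{Corol2}.
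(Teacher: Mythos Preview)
Your proposal is correct; both the subgradient and the inductive arguments are valid proofs of the finite Jensen inequality, and your remark about the subgradient route needing $\partial f(\bar x)\neq\emptyset$ is accurate and appropriately handled by favouring the induction. Note, however, that the paper does not actually supply a proof for this lemma: it is placed in the appendix under ``Some known results'' and stated without argument, so there is nothing to compare against beyond observing that you have filled in a standard textbook derivation the authors chose to omit.
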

% \begin{lem}[\textbf{Azuma-Hoeffding inequality}] \label{Lemma:AzumaHoeff} %external
% Let $E_1,\dots, E_n$ be a martingale or a supermartingale or a submartingale such that $|E_k - E_{k-1}| \leq c_k$ almost surely, for $k = 2, \dots ,n$. Then for any $\alpha > 0$,
% \begin{equation}
%     \mathbb{P}\Bigg(|E_k-E_0|> \alpha\sqrt{\sum_{i=1}^kc_k^2}\bigg)\leq 2\exp(-\frac{\alpha^2}{2}).
% \end{equation}
% \end{lem}
% \begin{proof}
% See \cite[p.\ 36]{wainwright2019high}
% \end{proof}
\addcontentsline{toc}{section}{Acknowledgment}
\section*{Acknowledgements}
This work was supported by the Engineering and Physical Research Council (EPSRC) grants EP/T026111/1, EP/S000631/1, and the MOD University Defence Research Collaboration (UDRC).
\printbibliography
\end{document}